\title{On quasi-log schemes}
\author{Osamu Fujino}
\date{2022/4/5, version 0.06}
\subjclass[2010]{Primary 14E30; Secondary 14N30}
\keywords{quasi-log schemes, basepoint-freeness of Reid--Fukuda type, 
basic slc-trivial fibrations, X-method, canonical bundle formula, 
minimal model program}
\address{Department of Mathematics, Graduate School of Science, 
Kyoto University, Kyoto 606-8502, Japan}
\email{fujino@math.kyoto-u.ac.jp}
\DeclareMathOperator{\Spec}{Spec}
\DeclareMathOperator{\Bs}{Bs}
\DeclareMathOperator{\Supp}{Supp}
\DeclareMathOperator{\Ngklt}{Ngklt}
\DeclareMathOperator{\Nglc}{Nglc}
\DeclareMathOperator{\Nqklt}{Nqklt}
\DeclareMathOperator{\Nqlc}{Nqlc}
\DeclareMathOperator{\Nklt}{Nklt}
\DeclareMathOperator{\Nlc}{Nlc}
\DeclareMathOperator{\NLC}{NLC}
\DeclareMathOperator{\Exc}{Exc}
\DeclareMathOperator{\Pic}{Pic}
\DeclareMathOperator{\Div}{Div}
\DeclareMathOperator{\Weil}{Weil}
\DeclareMathOperator{\Coker}{Coker}
\DeclareMathOperator{\rank}{rank}
\DeclareMathOperator{\coeff}{coeff}
\newtheorem{thm}{Theorem}[section]
\newtheorem{lem}[thm]{Lemma}
\newtheorem{cor}[thm]{Corollary}
\newtheorem*{claim-n}{Claim}
\theoremstyle{definition}
\newtheorem{step}{Step}
\newtheorem{ex}[thm]{Example}
\newtheorem{defn}[thm]{Definition}
\newtheorem{rem}[thm]{Remark}
\newtheorem*{ack}{Acknowledgments}  
\begin{document}

\maketitle 

\begin{abstract}
The notion of quasi-log schemes 
was first introduced by Florin Ambro in his epoch-making 
paper:~Quasi-log varieties. 
In this paper, we establish the basepoint-free theorem of 
Reid--Fukuda type for quasi-log schemes in full generality. 
Roughly speaking, 
it means that all the results for quasi-log schemes claimed 
in Ambro's paper hold true. 
The proof is Kawamata's X-method with the aid of the theory 
of basic slc-trivial fibrations. 
For the reader's convenience, we 
make many comments on the theory of 
quasi-log schemes in order to make it more accessible. 
\end{abstract}

\tableofcontents

\section{Introduction}\label{q-sec1}

Let us start with a simple situation. Let $X$ be 
a normal projective variety defined over $\mathbb C$, 
the complex number field. 
Then the following two conditions (I) and (II) are equivalent: 
\begin{itemize}
\item[(I)] there 
exist a proper birational morphism 
$f\colon Y\to X$ from a smooth variety $Y$ and a $\mathbb Q$-divisor 
$B_Y$ on $Y$ such that 
$\Supp B_Y$ is a simple normal crossing divisor 
with $\lfloor B_Y\rfloor\leq 0$ satisfying 
\begin{itemize}
\item[(1)] $K_Y+B_Y\sim _{\mathbb Q}f^*\omega$ for some 
$\mathbb Q$-Cartier $\mathbb Q$-divisor $\omega$ on $X$,  and 
\item[(2)] the natural map 
$$
\mathcal O_X\to f_*\mathcal O_Y(\lceil -B_Y\rceil)
$$
is an isomorphism,  
\end{itemize} 
and 
\item[(II)] $(X, B)$, where $B=f_*B_Y$ such that 
$f$ and $B_Y$ are as in (I), 
is kawamata log terminal in the usual sense. 
\end{itemize} 
Even if we replace the assumption that $f$ 
is proper birational in (I) with one that $f$ 
is only proper, many results, for example, 
the cone and contraction theorem, still hold true 
for $X$ with respect to $\omega$ (see \cite{fujino1}). 
This observation plays a crucial role in \cite{fujino12} and \cite{fujino16} 
(see also Section \ref{p-sec4}). 
Hence it is natural to consider the following more general 
setting. 

Let $X$ be a scheme with an $\mathbb R$-Cartier divisor 
$\omega$ on $X$. 
Note that $X$ may be reducible and may have non-reduced 
components. Of course, $X$ is not necessarily 
equidimensional. Assume that 
there exists a proper morphism $f\colon Y\to X$ from a 
globally embedded simple normal crossing pair 
$(Y, B_Y)$ such that 
\begin{itemize}
\item[(a)] $K_Y+B_Y\sim _{\mathbb R}f^*\omega$, and 
\item[(b)] the natural map $\mathcal O_X\to f_*\mathcal O_Y(\lceil 
-(B^{<1}_Y)\rceil)$ induces an isomorphism 
$$
\mathcal I_{X_{-\infty}}\overset{\simeq}{\longrightarrow} 
f_*\mathcal O_Y(\lceil 
-(B_Y^{<1})\rceil-\lfloor B_Y^{>1}\rfloor),  
$$ 
where $\mathcal I_{X_{-\infty}}$ is the defining ideal sheaf of 
a closed subscheme $X_{-\infty}\subsetneq X$. 
\end{itemize}
Then we call 
$$
\left(X, \omega, f\colon (Y, B_Y)\to X\right)
$$ 
or simply $[X, \omega]$ a {\em{quasi-log scheme}} 
(see \cite{ambro}). 
We can prove various Kodaira type vanishing 
theorems, the cone and contraction theorem, 
and so on, for quasi-log schemes (see 
\cite[Chapter 6]{fujino23}). 
If $X_{-\infty}$ is empty, then 
we say that 
$$
\left(X, \omega, f\colon (Y, B_Y)\to X\right)
$$ 
or $[X, \omega]$ is a {\em{quasi-log canonical pair}}. 
In general, quasi-log canonical pairs are reducible and 
are not equidimensional. 
However, it is surprising that they have only Du Bois singularities 
(see \cite{fujino-liu-haidong3}). 
Note that a log canonical pair can be seen 
as a quasi-log canonical pair by considering a suitable resolution 
of singularities. Hence \cite{fujino-liu-haidong3} is a 
complete generalization of \cite{kollar-kovacs}. 
More generally, 
let $(X, B)$ be a quasi-projective semi-log canonical pair. 
Then $[X, K_X+B]$ naturally becomes a quasi-log canonical 
pair (see \cite{fujino17}). Hence any union of 
some slc strata of $(X, B)$, which is denoted by 
$W$, has a natural quasi-log canonical 
structure induced from the one on $[X, K_X+B]$ by adjunction. 
Therefore, $W$ has only Du Bois singularities, the cone and 
contraction theorem holds for $W$ with 
respect to $(K_X+B)|_W$, and 
various Kodaira type vanishing theorems can be formulated 
on $W$. 

\medskip 

One of the main purposes of this paper is to establish the 
following theorem, which is \cite[Theorem 7.2]{ambro}. 
We note that there exists no detail of the proof of 
Theorem \ref{q-thm1.1} in \cite{ambro}. 

\begin{thm}[Basepoint-free theorem of 
Reid--Fukuda type for quasi-log schemes]\label{q-thm1.1}
Let $[X, \omega]$ be a quasi-log scheme, 
let $\pi\colon X\to S$ be a {\em{proper}} morphism between schemes, 
and let $L$ be a $\pi$-nef Cartier divisor on $X$ such that 
$qL-\omega$ is nef and log big over $S$ 
with respect to $[X, \omega]$ for some positive 
real number $q$. Assume that 
$\mathcal O_{X_{-\infty}}(mL)$ is $\pi$-generated for every $m\gg 0$. 
Then $\mathcal O_X(mL)$ is $\pi$-generated 
for every $m\gg 0$. 
\end{thm}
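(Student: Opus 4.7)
The plan is to run Kawamata's X-method adapted to the quasi-log setting, preceded by a reduction to the quasi-log canonical case. The first step is to eliminate $X_{-\infty}$ using the hypothesis together with vanishing: since $qL-\omega$ is nef and log big over $S$, so is $mL-\omega$ for every $m\ge q$, and the Koll\'ar-type vanishing theorem for quasi-log schemes applied to $\mathcal{I}_{X_{-\infty}}$ gives
\begin{equation*}
R^{i}\pi_{*}\bigl(\mathcal{I}_{X_{-\infty}}\otimes\mathcal{O}_{X}(mL)\bigr)=0 \quad \text{for } i>0,\ m\gg 0.
\end{equation*}
Pushing forward the exact sequence $0\to\mathcal{I}_{X_{-\infty}}\otimes\mathcal{O}_{X}(mL)\to\mathcal{O}_{X}(mL)\to\mathcal{O}_{X_{-\infty}}(mL)\to 0$ and using the assumption that the right-hand term is $\pi$-generated, one concludes that the relative base locus of $|mL|$ is disjoint from $X_{-\infty}$. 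This reduces the problem to the quasi-log canonical case $X_{-\infty}=\emptyset$.

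I would then proceed by induction on $\dim X$, the zero-dimensional case being trivial. For the inductive step, let $W$ be the union of all qlc centers of $[X,\omega]$ other than the maximal-dimensional ones. By adjunction for quasi-log canonical pairs, $W$ carries an induced quasi-log canonical structure, and $qL|_{W}-\omega|_{W}$ is again nef and log big over $S$, so the inductive hypothesis gives $\pi$-generation of $\mathcal{O}_{W}(mL)$ for $m\gg 0$. Applying the vanishing theorem once more, this time to the defining ideal of $W$ in $X$, lifts these sections, so the relative base locus of $|mL|$ is disjoint from a neighborhood of $W$; only points of $X\setminus W$ remain.

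These are handled by the X-method proper. A Shokurov-type non-vanishing result, combined with the relative bigness of $qL-\omega$, produces for each such point $x$ an effective divisor $D$ on $X$ such that $f^{*}D$ concentrates the singularities of $(Y,B_{Y})$ at a prescribed locus over $x$; choosing the largest real number $t>0$ for which $(Y,B_{Y}+tf^{*}D)$ remains sub-log canonical carves out a new stratum $V\subset Y$ whose image is a new qlc center through $x$. After a suitable log resolution, adjunction should put a quasi-log canonical structure on $f(V)$, the inductive hypothesis applies there, and the resulting section lifts through a further application of vanishing to produce a section of $|mL|$ non-vanishing at $x$. Compactness of $X$ over $S$ then yields global $\pi$-generation for $m\gg 0$.

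The main obstacle, and the point where the paper's technical input is essential, is guaranteeing that this last adjunction genuinely produces a \emph{quasi-log canonical} structure on $f(V)$ with the log-bigness of $qL-\omega$ preserved, despite $f(V)$ being generally reducible, non-normal, and non-equidimensional. This is precisely where the theory of basic slc-trivial fibrations and the associated canonical bundle formula on a possibly non-normal base is required, in place of the ordinary canonical bundle formula that suffices in the classical kawamata log terminal X-method; controlling the discriminant and moduli parts in this singular setting is the crux of the argument, and everything else is a fairly direct adaptation of the usual Reid--Fukuda strategy once adjunction and vanishing are available.
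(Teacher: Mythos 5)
Your opening reduction and your treatment of $\Nqklt(X,\omega)$ are close in spirit to the paper (vanishing plus adjunction plus induction does show that the relative base locus of $|mL|$ avoids $X_{-\infty}$ and all qlc centers), but two points do not survive scrutiny. First, a small one: knowing $\Bs|mL|\cap X_{-\infty}=\emptyset$ does \emph{not} "reduce the problem to the quasi-log canonical case $X_{-\infty}=\emptyset$". The non-qlc locus is part of the quasi-log structure and cannot be discarded; every adjunction carries $X_{-\infty}$ along. This is repairable (the paper keeps $X_{-\infty}$, reduces instead to the case where $\overline{X\setminus X_{-\infty}}$ is irreducible, and uses the hypothesis on $\mathcal O_{X_{-\infty}}(mL)$ only to conclude freeness along all of $\Nqklt(X,\omega)$), so by itself it is a flaw of formulation rather than a fatal error.

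The genuine gap is your final step, which is also where you yourself admit the argument is incomplete. Producing, for each remaining point $x$, an effective divisor $D$ whose pullback "concentrates the singularities" so that an lc-threshold cut creates a new qlc center through $x$ requires either ampleness or at least projectivity of $\pi$ together with Kodaira's lemma or Angehrn--Siu type volume estimates; with $L$ merely $\pi$-nef, $qL-\omega$ merely nef and log big over $S$, and $\pi$ only proper, no such $D$ is available --- avoiding exactly this is the point of the theorem, since the earlier proofs in the projective case used Kodaira's lemma. Moreover, even granting such a $D$, replacing $\omega$ by $\omega+tD$ destroys the hypothesis: $qL-(\omega+tD)$ need not be nef, because nef-and-log-big leaves no positivity to absorb the perturbation, and the nonvanishing you invoke at the level of the quasi-log scheme is not available as a black box. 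The paper's mechanism is different and does not create new qlc centers at all: after reducing to $W=\overline{X\setminus X_{-\infty}}$ irreducible, one normalizes, $\nu\colon Z\to W$, and applies Theorem \ref{q-thm1.3} (basic slc-trivial fibrations over the \emph{normal} base $Z$, not over a non-normal one as you suggest) to write $p^*\nu^*(\omega|_W)=K_{Z'}+\mathbf B_{Z'}+\mathbf M_{Z'}$ on a smooth model $Z'$ with $\mathbf M_{Z'}$ potentially nef and with the ideal inclusion $\mathcal J_{\Ngklt}=p_*\mathcal O_{Z'}(-\lfloor \mathbf B_{Z'}\rfloor)\subset \mathcal I_{\Nqklt(Z,\nu^*(\omega|_W))}$. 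Then $qp^*\nu^*L-(K_{Z'}+\mathbf B_{Z'})$ is nef and big over $S$, so the classical X-method on the smooth model (Shokurov nonvanishing, Kawamata--Viehweg vanishing, Lemma \ref{p-lem4.4}) produces sections of $\mathcal O_Z(\ell^s\nu^*L)\otimes \mathcal J_{\Ngklt}$, which the ideal inclusion transports to sections of $\mathcal O_X(\ell^sL)\otimes\mathcal I_{\Nqklt(X,\omega)}$; combined with freeness along $\Nqklt(X,\omega)$ and the two coprime powers trick, this gives $\Bs|mL|=\emptyset$ for $m\gg 0$. Without this transfer to a generalized-pair structure on the normalization, your point-by-point cutting strategy has no way to get started under the stated hypotheses.
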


This type of basepoint-free theorem was first considered by Reid 
in \cite[10.4.~Eventual freedom]{shokurov} in order to avoid 
Zariski's famous counterexample (see \cite[Remark 3-1-2.(2)]{kmm}). 
Note that Section 10 in \cite{shokurov} was 
written by Reid when he translated Shokurov's paper 
from Russian to English (see 
\cite[\S 10.~Commentary by M.~Reid]{shokurov}). 
Then Fukuda treated this problem in a series of papers 
(see \cite{fukuda1}, \cite{fukuda2}, and \cite{fukuda3}). 
We know that the basepoint-free theorem of Reid--Fukuda 
type holds true for divisorial log terminal pairs 
of arbitrary dimension (see \cite{fujino2}). 
In \cite{ambro}, Ambro claims Theorem \ref{q-thm1.1} 
without proof. In \cite[Section 6.9]{fujino23}, 
we proved Theorem \ref{q-thm1.1} under the extra assumption that 
$X_{-\infty}=\emptyset$ and $\pi$ is projective. 
In \cite{fujino20}, we treated Theorem \ref{q-thm1.1} 
when $\pi$ is projective and $X_{-\infty}$ may be nonempty.  
In \cite[Section 6.9]{fujino23} and \cite{fujino20}, we use 
Kodaira's lemma for big divisors. Hence the projectivity of 
$\pi$ is indispensable in \cite[Section 6.9]{fujino23} and \cite{fujino20}. 
Our approach to Theorem \ref{q-thm1.1} in this paper 
is completely different 
from the one in \cite[Section 6.9]{fujino23} and \cite{fujino20}. 
We use the theory of basic $\mathbb R$-slc-trivial fibrations, 
which is discussed in \cite[Sections 3 and 5]{fujino-hashizume2}, 
in 
order to prove 
Theorem \ref{q-thm1.1}.  
This means that we use the theory of variations of 
mixed Hodge structure 
on cohomology with compact support for the 
proof of Theorem \ref{q-thm1.1} (see \cite{fujino-fujisawa1} 
and \cite{fujino-fujisawa-saito}). 
We think that the main importance of 
Theorem \ref{q-thm1.1} 
is not in the statement but in the techniques in the proof. 
By Theorem \ref{q-thm1.1} and the author's series of 
papers, we can recover all the results for 
quasi-log schemes in \cite{ambro}. 
As an obvious corollary of Theorem \ref{q-thm1.1}, we have: 

\begin{cor}[Basepoint-free theorem of Reid--Fukuda type 
for log canonical pairs]\label{q-cor1.2}
Let $(X, \Delta)$ be a log canonical pair and 
let $\pi\colon X\to S$ be 
a proper morphism to a scheme $S$. 
Let $L$ be a $\pi$-nef Cartier divisor on $X$ such that 
$qL-(K_X+\Delta)$ is nef and log big over $S$ with 
respect to $(X, \Delta)$ for some positive real number $q$. 
Then $\mathcal O_X(mL)$ is $\pi$-generated 
for every $m\gg 0$. 
\end{cor}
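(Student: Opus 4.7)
The plan is to derive Corollary \ref{q-cor1.2} from Theorem \ref{q-thm1.1} by endowing the log canonical pair $(X,\Delta)$ with its canonical quasi-log structure. First, I would take a log resolution $f\colon Y\to X$ of $(X,\Delta)$ and define $B_Y$ by $K_Y+B_Y=f^*(K_X+\Delta)$, so that $\Supp B_Y$ is simple normal crossing and $K_Y+B_Y\sim_{\mathbb R}f^*\omega$ with $\omega:=K_X+\Delta$. Since $(X,\Delta)$ is log canonical, every coefficient of $B_Y$ is at most $1$; in particular $B_Y^{>1}=0$ and $\lfloor B_Y^{>1}\rfloor=0$. After embedding $Y$ into a smooth ambient variety (which is harmless because resolutions of log canonical pairs are quasi-projective), the pair $(Y,B_Y)$ becomes a globally embedded simple normal crossing pair. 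A standard relative vanishing argument gives $f_*\mathcal O_Y(\lceil -(B_Y^{<1})\rceil)=\mathcal O_X$, so condition (b) of the definition of quasi-log scheme is satisfied with $\mathcal I_{X_{-\infty}}=\mathcal O_X$, i.e.\ $X_{-\infty}=\emptyset$. Thus $(X,\omega,f\colon(Y,B_Y)\to X)$ is a quasi-log canonical pair.

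Next, I would verify the dictionary between the two notions of ``log big''. The qlc strata of $[X,K_X+\Delta]$ arising from the above quasi-log structure are, by construction, the images under $f$ of the strata of the reduced divisor $B_Y^{=1}$ together with $X$ itself; these are precisely the log canonical centers of $(X,\Delta)$ (plus $X$). Consequently, the hypothesis that $qL-(K_X+\Delta)$ is nef and log big over $S$ with respect to $(X,\Delta)$ is literally the same statement as nef and log big over $S$ with respect to the quasi-log canonical pair $[X,K_X+\Delta]$.

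Finally, since $X_{-\infty}=\emptyset$, the hypothesis in Theorem \ref{q-thm1.1} that $\mathcal O_{X_{-\infty}}(mL)$ be $\pi$-generated for every $m\gg 0$ is vacuous. All remaining hypotheses of Theorem \ref{q-thm1.1}, namely $\pi$-nefness of $L$ and nef-and-log-bigness of $qL-\omega$, are already granted. Applying Theorem \ref{q-thm1.1} yields that $\mathcal O_X(mL)$ is $\pi$-generated for every $m\gg 0$, which is the conclusion. There is no real obstacle in this deduction; the only substantive point is the identification of lc centers with qlc strata, which is a standard feature of the quasi-log structure associated to a log canonical pair and has been recorded in the author's earlier work on the subject.
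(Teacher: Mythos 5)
Your proposal is correct and follows essentially the same route as the paper: the paper's proof simply invokes Example \ref{q-ex5.3} to equip $[X, K_X+\Delta]$ with its natural quasi-log structure having $\Nqlc(X, K_X+\Delta)=\emptyset$ and lc centers matching qlc centers, and then applies Theorem \ref{q-thm1.1}, whose hypothesis on $X_{-\infty}$ is vacuous. You merely spell out the content of Example \ref{q-ex5.3} (the only small inaccuracy is that $f_*\mathcal O_Y(\lceil -(B_Y^{<1})\rceil)=\mathcal O_X$ needs no vanishing theorem: $\lceil -(B_Y^{<1})\rceil$ is effective and $f$-exceptional since $\Delta\geq 0$, so normality of $X$ suffices).
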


When $\pi$ is projective, Corollary \ref{q-cor1.2} 
is nothing but \cite[Corollary 6.9.4]{fujino23}. 
As far as we know, there is no approach 
to Corollary \ref{q-cor1.2} without using the theory of 
quasi-log schemes. 
The main ingredient of 
the proof of Theorem \ref{q-thm1.1} is the 
following theorem, which is a generalization of 
\cite[Theorem 1.7]{fujino30} and \cite[Theorem 7.1]{fujino35}. 
As is well known, some generalizations of Kodaira's canonical 
bundle formula are very useful for various geometric problems 
(see \cite{fujino-mori}, \cite{fujino3}, \cite{fujino4}, 
\cite{fujino-gongyo1}, \cite{fujino-gongyo2}, and so on). 
Theorem \ref{q-thm1.3} below is a kind of canonical bundle formula. 
The proof depends on \cite[Theorem 5.1 and 
Corollary 5.2]{fujino-hashizume2}, 
that is, the theory of basic $\mathbb R$-slc-trivial fibrations. 
Roughly speaking, Theorem 
\ref{q-thm1.3} says that every normal irreducible quasi-log 
scheme naturally becomes a {\em{generalized pair}}. 
For the details of generalized pairs, we recommend the 
reader to see Birkar's survey article \cite{birkar}. 

\begin{thm}[Normal irreducible 
quasi-log schemes]\label{q-thm1.3}
Let $$
\left(X, \omega, f\colon (Y, B_Y)\to X\right)
$$ 
be a quasi-log scheme such that 
$X$ is a normal variety, $f$ is projective, and every 
stratum of $Y$ is dominant onto $X$. 
Then $f\colon (Y, B_Y)\to X$ is a basic $\mathbb R$-slc-trivial 
fibration. 
Let $\mathbf B$ and $\mathbf M$ be the discriminant 
and moduli $\mathbb R$-b-divisors associated to 
$f\colon (Y, B_Y)\to X$, respectively. 
Then there exists a projective 
birational morphism $p\colon X'\to X$ from a 
smooth quasi-projective variety $X'$ such that 
\begin{itemize}
\item[(i)] $\mathbf K+\mathbf B=\overline 
{\mathbf K_{X'}+\mathbf B_{X'}}$ holds, 
where $\mathbf K$ is the canonical 
b-divisor of $X$, 
\item[(ii)] $\Supp \mathbf B_{X'}$ is a simple 
normal crossing divisor on $X'$, 
\item[(iii)] $\mathbf M=\overline {\mathbf M_{X'}}$ 
holds such that $\mathbf M_{X'}$ is a potentially nef 
$\mathbb R$-divisor on $X'$, 
\item[(iv)] $p\left(\mathbf B^{\geq 1}_{X'}\right)
=\Nqklt(X, \omega)$ holds set 
theoretically, and 
\item[(v)] $p\left(\mathbf B^{> 1}_{X'}\right)
=\Nqlc(X, \omega)$ holds set 
theoretically. 
\end{itemize} 
Note that 
$$
\mathbf K+\mathbf B+\mathbf M=\overline \omega
$$ 
holds by definition. 
Moreover, if we put 
$$
\mathcal J_{\Ngklt}: =
p_*\mathcal O_{X'}\left(-\lfloor \mathbf B_{X'}\rfloor\right)
$$ 
and 
$$
\mathcal J_{\Nglc}: =p_*\mathcal O_{X'}
\left(-\lfloor \mathbf B_{X'}\rfloor+\mathbf B^{=1}_{X'}\right)
=p_*\mathcal O_{X'}\left(\lceil -(\mathbf B^{<1}_{X'})\rceil -\lfloor 
\mathbf B^{>1}_{X'}\rfloor\right), 
$$ 
then $\mathcal J_{\Ngklt}=p_*\mathcal O_{X'}\left(-\lfloor 
\mathbf B^{\geq 1}_{X'}\rfloor\right)$ and 
$\mathcal J_{\Nglc}=p_*\mathcal O_{X'}\left(-\lfloor 
\mathbf B^{>1}_{X'}\rfloor\right)$ are ideal sheaves 
on $X$ such that the following inclusions 
$$
\mathcal J_{\Ngklt}\subset \mathcal I_{\Nqklt(X, \omega)} 
\quad \text{and} 
\quad \quad
\mathcal J_{\Nglc}\subset \mathcal I_{\Nqlc(X, \omega)}
$$ 
hold, where $\mathcal I_{\Nqklt(X, \omega)}$ and 
$\mathcal I_{\Nqlc(X, \omega)}$ are the defining 
ideal sheaves of $\Nqklt(X, \omega)$ and 
$\Nqlc(X, \omega)$, respectively. 

We note that in the above statement 
$\mathbf B$ and $\mathbf M$ become 
$\mathbb Q$-b-divisors when 
$$
\left(X, \omega, f\colon (Y, B_Y)\to X\right)
$$ 
has a $\mathbb Q$-structure. 
\end{thm}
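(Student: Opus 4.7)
The plan is to realize the quasi-log datum as a basic $\mathbb{R}$-slc-trivial fibration and then feed it into the structural result \cite[Theorem 5.1 and Corollary 5.2]{fujino-hashizume2} to produce the birational model $p\colon X'\to X$ with all the required b-divisorial properties.

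\textbf{Step 1: Verification that $f\colon (Y,B_Y)\to X$ is a basic $\mathbb{R}$-slc-trivial fibration.} By the definition of a quasi-log scheme, $(Y,B_Y)$ is a globally embedded simple normal crossing pair and $K_Y+B_Y\sim_{\mathbb R}f^*\omega$. Since every stratum of $Y$ dominates $X$ and $X$ is normal, I would check at the generic point of $X$ that the restriction $(Y_\eta, B_Y|_{Y_\eta})$ is semi-log canonical (the components of $B_Y^{>1}$ cannot appear horizontally because they would contradict property (b) combined with the stratum-dominance hypothesis, so $B_Y^{=1}$ contributes only the slc strata of the generic fiber). The rank-one property of $f_*\mathcal{O}_Y(\lceil -(B_Y^{<1})\rceil)$ at the generic point then follows from condition (b) in the quasi-log definition, since $X_{-\infty}$ cannot dominate $X$ when every stratum of $Y$ does. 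This establishes the basic $\mathbb R$-slc-trivial fibration axioms.

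\textbf{Step 2: Applying the structural theorem.} Once the fibration is in place, I would define the discriminant b-divisor $\mathbf B$ and moduli b-divisor $\mathbf M$ in the usual way, so that $\mathbf K+\mathbf B+\mathbf M=\overline{\omega}$ by construction. Then \cite[Theorem 5.1 and Corollary 5.2]{fujino-hashizume2} produce a projective birational $p\colon X'\to X$ from a smooth quasi-projective variety such that $\mathbf B_{X'}$ has simple normal crossing support, $\mathbf K+\mathbf B=\overline{\mathbf K_{X'}+\mathbf B_{X'}}$, and $\mathbf M=\overline{\mathbf M_{X'}}$ with $\mathbf M_{X'}$ potentially nef. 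This yields (i), (ii), and (iii) directly; the Hodge-theoretic heart of the argument — the potential nefness of $\mathbf M_{X'}$ — is imported wholesale from \cite{fujino-fujisawa1} via \cite{fujino-hashizume2}.

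\textbf{Step 3: Identifying Nqklt and Nqlc set-theoretically.} For (iv) and (v), I would argue that a stratum $W$ of $(Y,B_Y)$ meets $B_Y^{=1}$ (respectively $B_Y^{>1}$) if and only if its image $f(W)$ sits inside the locus where the discriminant coefficient is $\geq 1$ (respectively $>1$), because the coefficients of $\mathbf B_{X'}$ along a prime divisor $P$ on $X'$ are computed by the log canonical threshold of the pullback of $B_Y$ at the generic point of $P$. Combined with the description of $\mathrm{Nqklt}(X,\omega)$ and $\mathrm{Nqlc}(X,\omega)$ as images of qlc strata and of the $B_Y^{>1}$ locus respectively, this gives the set-theoretic equalities after pushing forward along $p$.

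\textbf{Step 4: The ideal sheaf inclusions.} For the final assertions, I would first note that $\mathcal J_{\mathrm{Ngklt}}$ and $\mathcal J_{\mathrm{Nglc}}$ are genuine ideal sheaves because $\mathbf K+\mathbf B=\overline{\mathbf K_{X'}+\mathbf B_{X'}}$ forces the relevant pushforwards to sit inside $\mathcal O_X$ (using normality of $X$ and that $-\lfloor \mathbf B_{X'}^{\geq 1}\rfloor\leq 0$). Rewriting $-\lfloor \mathbf B_{X'}\rfloor=\lceil -\mathbf B_{X'}^{<1}\rceil-\lfloor \mathbf B_{X'}^{\geq 1}\rfloor$ shows $\mathcal J_{\mathrm{Ngklt}}=p_*\mathcal O_{X'}(-\lfloor \mathbf B_{X'}^{\geq 1}\rfloor)$; an analogous manipulation with $\mathbf B_{X'}^{=1}$ added back yields the formula for $\mathcal J_{\mathrm{Nglc}}$. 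The two inclusions then follow from (iv), (v) and the fact that local sections of the right-hand sheaves vanish along the subschemes on the right. The $\mathbb Q$-coefficient statement is immediate from inspecting the construction of $\mathbf B$ and $\mathbf M$.

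\textbf{Main obstacle.} The genuinely delicate point is Step 1: carefully matching the quasi-log axioms (a), (b) with the technical axioms of a basic $\mathbb R$-slc-trivial fibration, in particular the rank-one condition on the generic fiber and the semi-log canonicity after restriction. Once that bridge is crossed, the rest is bookkeeping plus a direct appeal to the structural theorem from \cite{fujino-hashizume2}, whose proof already encapsulates the Hodge-theoretic input.
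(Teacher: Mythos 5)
Your Steps 1--3 follow essentially the same route as the paper (Stein factorization plus normality to get $f_*\mathcal O_Y\simeq \mathcal O_X$ and the rank-one condition from the fact that $\lfloor B_Y^{>1}\rfloor$ is not dominant, then \cite[Theorem 5.1 and Corollary 5.2]{fujino-hashizume2} plus Hironaka for (i)--(iii), and a choice of model for (iv), (v), which the paper imports from the proof of \cite[Theorem 7.1]{fujino35}). The genuine gap is in Step 4, and it is exactly the point the paper flags as the new content of its proof. The inclusions $\mathcal J_{\Ngklt}\subset \mathcal I_{\Nqklt(X,\omega)}$ and $\mathcal J_{\Nglc}\subset \mathcal I_{\Nqlc(X,\omega)}$ are ideal-theoretic statements: $\mathcal I_{\Nqklt(X,\omega)}=f_*\mathcal O_Y(-\lfloor B_Y\rfloor)$ and $\mathcal I_{\Nqlc(X,\omega)}=f_*\mathcal O_Y(-\lfloor B_Y\rfloor+B_Y^{=1})$ are multiplier-type ideals that are in general \emph{not radical}, and they impose vanishing to higher order along the images of components of $B_Y^{\geq 1}$ when the coefficients exceed $1$. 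The set-theoretic equalities (iv) and (v) only tell you that sections of $\mathcal J_{\Ngklt}$ vanish on the underlying reduced set, i.e.\ they land in the radical of $\mathcal I_{\Nqklt(X,\omega)}$; they cannot by themselves give the inclusion into the actual ideal. The paper bridges this by a multiplicity comparison: using the flattening theorem of Raynaud--Gruson \cite{raynaud-g} it arranges that a component $P$ of $B_Y^{\geq 1}$ with $\coeff_P B_Y=1+a$ maps to a prime divisor $Q$ on a suitable model $X'$, and then with $m=\coeff_{P'}f'^*Q$ and $\coeff_Q\mathbf B_{X'}=1-t$, $t\leq -\tfrac{a}{m}$, the elementary inequality $m(1+\lfloor -t\rfloor)\geq 1+\lfloor a\rfloor$ (Lemma \ref{q-lem6.1}) shows that the pullback $f^*h$ of any local section $h$ of $\mathcal J_{\Ngklt}$ lies in $\mathcal O_Y(-\lfloor B_Y^{\geq 1}\rfloor)\subset \mathcal O_Y(-\lfloor B_Y\rfloor)$. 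Nothing in your proposal plays the role of this order-of-vanishing estimate, so the final inclusions are not established.

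A secondary issue in the same step: your justification that $\mathcal J_{\Ngklt}$ and $\mathcal J_{\Nglc}$ are ideal sheaves, and that $p_*\mathcal O_{X'}(-\lfloor \mathbf B_{X'}\rfloor)=p_*\mathcal O_{X'}(-\lfloor \mathbf B^{\geq 1}_{X'}\rfloor)$, silently uses that the part of $\mathbf B_{X'}$ with negative coefficients is $p$-exceptional; otherwise the rewriting $-\lfloor \mathbf B_{X'}\rfloor=\lceil -(\mathbf B_{X'}^{<1})\rceil-\lfloor \mathbf B_{X'}^{\geq 1}\rfloor$ permits poles along non-exceptional divisors and the pushforward need not sit in $\mathcal O_X$. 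This requires knowing that the discriminant $\mathbf B_X$ is effective, which is not automatic for a basic slc-trivial fibration and is deduced in the paper from the quasi-log condition (b) via the proof of \cite[Lemma 11.2]{fujino30}. You should add that input explicitly.
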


We will prove Theorem \ref{q-thm1.1} by 
combining Kawamata's X-method with Theorem \ref{q-thm1.3} 
in the framework of quasi-log schemes. 
We note that we do not use the minimal model program in this paper. 

Let us explain the idea of the proof of Theorem \ref{q-thm1.1}. 
For simplicity of notation, we assume that $S$ is a point. 
By the standard argument in the theory of quasi-log schemes, 
we can reduce the problem to the case where $X$ is irreducible and 
$\mathcal O_{\Nqklt(X, \omega)}(mL)$ is generated 
by global sections for every $m\gg 0$. 
This implies that $\Bs|mL|\cap \Nqklt(X, \omega)=\emptyset$ 
for every $m\gg 0$. Let $\nu\colon Z\to X$ be the normalization. 
Then $[Z, \nu^*\omega]$ has a natural quasi-log scheme structure 
with 
$\nu_*\mathcal I_{\Nqklt(Z, \nu^*\omega)}=\mathcal I_{\Nqklt(X, \omega)}$. 
Moreover, we may assume that 
$[Z, \nu^*\omega]$ satisfies Theorem \ref{q-thm1.3}. 
By the classical X-method, we can prove that there are many 
global sections of 
$\mathcal O_Z(m\nu^*L)\otimes \mathcal J_{\Ngklt}$ for $m\gg 0$, 
where 
$\mathcal J_{\Ngklt}$ is the ideal sheaf defined in Theorem \ref{q-thm1.3}. 
By $\mathcal J_{\Ngklt}\subset \mathcal I_{\Nqklt(Z, \nu^*\omega)}$ and 
$\nu_*\mathcal I_{\Nqklt(Z, \nu^*\omega)}=\mathcal I_{\Nqklt(X, \omega)}$, 
$$H^0\left(Z, \mathcal O_Z(m\nu^*L)\otimes \mathcal J_{\Ngklt}\right)
\subset H^0\left(X, \mathcal O_X(mL)
\otimes \mathcal I_{\Nqklt(X, \omega)}\right)$$ 
holds for every $m$. 
Hence $\mathcal O_X(mL)\otimes \mathcal I_{\Nqklt(X, \omega)}$ has 
many global sections such that 
$\Bs|mL|\cap \left(X\setminus \Nqklt(X, \omega)\right)=\emptyset$. 
Therefore, we obtain $\Bs|mL|=\emptyset$ for $m \gg 0$. 

\medskip 

Finally, we note: 

\begin{rem}\label{q-rem1.4} 
Recently, the minimal model program 
for threefolds in positive and mixed 
characteristic is developing rapidly. Moreover, 
the minimal model program for K\"ahler threefolds 
is studied extensively. Unfortunately, however, 
our framework 
of quasi-log schemes only works for algebraic 
varieties in characteristic zero. 
This is because it heavily depends 
on the theory of mixed Hodge structures 
and the theory of variations of mixed 
Hodge structure. It is a challenging 
and interesting problem to discuss 
the theory of quasi-log schemes in other settings. 
\end{rem}

This paper is organized as follows. 
In Section \ref{q-sec2}, we make some comments on base 
fields and the Lefschetz principle for the reader's convenience. 
In Section \ref{p-sec3}, we collect some basic definitions. 
In Section \ref{p-sec4}, we slightly reformulate the 
Kawamata--Shokurov basepoint-free theorem. 
The results in this section can be proved 
by Kawamata's X-method without 
difficulties. 
In Section \ref{q-sec5}, we quickly recall the definition of 
quasi-log schemes and basic slc-trivial fibrations 
and explain some fundamental results. 
In Section \ref{q-sec6}, we prove Theorem \ref{q-thm1.3}, which is 
one of the main results of this paper. 
Section \ref{q-sec7} is devoted to the proof of Theorem \ref{q-thm1.1}. 
In Section \ref{q-sec8}, we make many comments on \cite{ambro} 
to help the reader 
understand differences between Ambro's original 
approach in \cite{ambro} and our framework of quasi-log schemes. 

\begin{ack}
The author was partially 
supported by JSPS KAKENHI Grant Numbers 
JP19H01787, JP20H00111, 
JP21H00974, JP21H04994.
He would like to thank 
Kenta Hashizume very much for fruitful discussions. 
Finally, he thanks the referee for many useful comments. 
\end{ack}

We will work over $\mathbb C$, the complex number field, 
throughout this paper. 
A {\em{scheme}} means a separated scheme of finite type over $\mathbb C$. 
A {\em{variety}} means an integral scheme, that is, 
an irreducible and reduced separated scheme of finite type over $\mathbb C$. 
We will freely use the framework of quasi-log schemes 
established in \cite[Chapter 6]{fujino23}. 
We note that $\mathbb Z$, $\mathbb Q$, and 
$\mathbb R$ denote the set of {\em{intergers}}, 
{\em{rational numbers}}, and {\em{real numbers}}, respectively. 
We also note that $\mathbb Q_{>0}$ and $\mathbb R_{>0}$ 
are the set of {\em{positive rational numbers}} and 
{\em{positive real numbers}}, respectively. 
In this paper, 
the expression \lq ... for every $m\gg 0$\rq \ 
means that \lq there exists a positive number $m_0$ such that ... 
for every $m\geq m_0$.\rq 

\section{On the Lefschetz principle}\label{q-sec2}

In this short section, before starting the main 
contents of this paper, we make some comments 
on base fields and the Lefschetz principle for the reader's convenience.

\begin{rem}[On the base field $k$]\label{q-rem2.1}
We mainly work over $\mathbb C$, the complex number field, in the 
papers on quasi-log schemes (see \cite[Chapter 6]{fujino23}). 
This is because the author's approach 
depends on the theory of mixed Hodge structures on cohomology with 
compact support. However, almost all results on quasi-log schemes 
hold true over any algebraically closed field $k$ of characteristic zero. 
For example, we can prove the vanishing theorems for quasi-log 
schemes over $k$ by the Lefschetz principle. 
Hence the cone and contraction theorem for quasi-log schemes 
defined over $k$ 
can be proved as an application of some vanishing theorems. 
When we treat {\em{sufficiently general fibers}}, 
{\em{uniruledness}}, {\em{rationally chain connectedness}}, 
and so on, we have to take care of the base field $k$ if 
the cardinality of $k$ is countable (see \cite{fujino35}). 
It is obvious that 
some results, for example, the simply connectedness of quasi-log 
canonical Fano pairs, make sense only over $\mathbb C$ 
(see \cite{fujino-liu-wenfei} and \cite{fujino31}). 
We note that we can check that all the results obtained 
in this paper hold true over any algebraically closed 
field $k$ of characteristic zero without any difficulties. 
\end{rem}

Let us quickly see how to use the Lefschetz principle. 
Let $X$ be a projective scheme defined over an algebraically closed 
field $k$ of characteristic zero. 
Let $L$ be a Cartier divisor on $X$ and let $H$ be an ample 
Cartier divisor on $X$. 
We can take a subfield $k_0$ of $k$, 
which is finitely generated over $\mathbb Q$, and 
a scheme $X_0$ defined over $k_0$, a Cartier divisor 
$L_0$ on $X_0$, and an ample 
Cartier divisor $H_0$ on $X_0$ such that 
$X\simeq X_0\times _{\Spec k_0} \Spec k$, 
$L\simeq L_0\times _{\Spec k_0}\Spec k$, and 
$H\simeq H_0\times _{\Spec k_0}\Spec k$. 
We consider some embedding $k_0\subset \mathbb C$ and 
the induced morphism $\Spec \mathbb C\to \Spec k_0$. 
Then we put 
$X_{\mathbb C}:= X_0\times _{\Spec k_0} \Spec \mathbb C$, 
$L_{\mathbb C}:=L_0\times _{\Spec k_0}\Spec \mathbb C$, and 
$H_{\mathbb C}:=H_0\times _{\Spec k_0}\Spec \mathbb C$. 
Since $H$ is ample, $H_0$ and $H_{\mathbb C}$ are both 
ample. 
Note that $L$, $L_0$, and $L_{\mathbb C}$ are nef 
if and only if $L+rH$, $L_0+rH_0$, 
and $L_{\mathbb C}+rH_{\mathbb C}$ are 
ample for every rational number $r$ with $0<r\ll 1$, respectively. 
Hence, $L$ is nef if and only if $L_{\mathbb C}$ is nef. 

\section{Preliminaries}\label{p-sec3}

Here, we collect some basic definitions for the reader's convenience. 
Let $X$ be a scheme and let $\Pic(X)$ be the group of 
line bundles on $X$, that is, the {\em{Picard group}} of $X$. 
An element of $\Pic(X)\otimes_{\mathbb Z}\mathbb R$ 
(resp.~$\Pic(X)\otimes _{\mathbb Z}\mathbb Q$) is called an 
{\em{$\mathbb R$-line bundle}} (resp.~a {\em{$\mathbb Q$-line 
bundle}}) on $X$. We write the group law of $\Pic(X)\otimes 
_{\mathbb Z}\mathbb Q$ additively for simplicity of notation. 
Let $\Div(X)$ be the group of 
Cartier divisors on $X$. 
An element of $\Div(X)\otimes _{\mathbb Z} \mathbb R$ 
(resp.~$\Div(X)\otimes _{\mathbb Z} \mathbb Q$) is called an 
{\em{$\mathbb R$-Cartier divisor}} (resp.~a 
{\em{$\mathbb Q$-Cartier divisor}}) on $X$. 
Let $\Delta_1$ and $\Delta_2$ be $\mathbb R$-Cartier 
(resp.~$\mathbb Q$-Cartier) divisors on $X$. 
Then $\Delta_1\sim _{\mathbb R} \Delta_2$ (resp.~$\Delta_1\sim 
_{\mathbb Q}\Delta_2$) means 
that $\Delta_1$ is {\em{$\mathbb R$-linearly}} 
(resp.~{\em{$\mathbb Q$-linearly}}) {\em{equivalent}} to $\Delta_2$. 
There exists a natural group homomorphism 
$
\Div (X)\to \Pic(X)
$ given by $A\mapsto \mathcal O_X(A)$, 
where $A$ is a Cartier divisor on $X$. 
It induces a homomorphism 
$
\delta_X\colon \Div(X)\otimes _{\mathbb Z}\mathbb R 
\to \Pic(X)\otimes _{\mathbb Z}\mathbb R$.  
We sometimes write 
$
A+\mathcal L\sim _{\mathbb R} B+\mathcal M
$ 
for $A, B\in \Div(X)\otimes _{\mathbb Z} \mathbb R$ and $\mathcal L, 
\mathcal M\in \Pic(X)\otimes _{\mathbb Z} \mathbb R$. 
This means that 
$
\delta_X(A)+\mathcal L=\delta_X(B)+\mathcal M
$ 
holds in $\Pic(X)\otimes _{\mathbb Z}\mathbb R$. 
We usually use this type of abuse of notation, that is, 
the confusion of $\mathbb R$-line bundles with 
$\mathbb R$-Cartier divisors. 
In the theory of minimal models for higher-dimensional algebraic 
varieties, we sometimes use $\mathbb R$-Cartier divisors for 
ease of notation even when they should be $\mathbb R$-line bundles.  

\medskip 

Let us recall the definition of {\em{potentially nef divisors}}. 
We need it in Theorem \ref{q-thm1.3}. 

\begin{defn}[{Potentially nef divisors, see 
\cite[Definition 2.5]{fujino30}}]\label{p-def3.1} 
Let $X$ be a normal 
variety and let $D$ be a divisor on $X$. 
If there exist a completion $X^\flat$ of $X$, 
that is, $X^\flat$ is a complete normal 
variety and contains $X$ as a dense Zariski open subset, and 
a nef divisor $D^\flat$ on $X^\flat$ such that 
$D=D^\flat|_X$, then $D$ is called 
a {\em{potentially nef}} divisor on $X$. 
A finite $\mathbb Q_{>0}$-linear (resp.~$\mathbb R_{>0}$-linear) 
combination of potentially nef divisors is called 
a {\em{potentially nef}} 
$\mathbb Q$-divisor (resp.~$\mathbb R$-divisor). 
\end{defn}

\begin{rem}\label{p-rem3.2} 
(i) Let $D$ be a nef $\mathbb R$-divisor on a smooth 
projective variety $X$. 
Then $D$ is not necessarily a potentially nef $\mathbb R$-divisor. 
This means that $D$ is not always a finite $\mathbb R_{>0}$-linear 
combination of nef Cartier divisors on $X$. 
(ii) Let $X$ be a normal variety and let $D$ be a potentially 
nef $\mathbb R$-divisor on $X$. Then $D\cdot C\geq 0$ for every 
projective curve $C$ on $X$. 
In particular, $D$ is $\pi$-nef for every proper 
morphism $\pi\colon X\to S$ to a scheme $S$. 
\end{rem}

It is convenient to use {\em{b-divisors}} to explain 
several results. We note that the notion of b-divisors 
was first introduced by Shokurov. 
Let us recall the definition of b-divisors for the reader's 
convenience. 

\begin{defn}[b-divisors]\label{p-def3.3} 
Let $X$ be a normal variety and let $\Weil (X)$ be the space of Weil 
divisors on $X$. A {\em{b-divisor}} on $X$ is an element: 
$$
\mathbf D\in \mathbf{Weil} (X) :=\lim_{Y\to X} \Weil (Y), 
$$ 
where the (projective) limit is taken over all proper birational 
morphism $f\colon Y\to X$ from a normal variety $Y$ under the 
pushforward homomorphism $f_*: \Weil (Y)\to \Weil (X)$. 
We can define {\em{$\mathbb Q$-b-divisors}} and 
{\em{$\mathbb R$-b-divisors}} on $X$ similarly.  
If $\mathbf D=\sum d_\Gamma \Gamma$ is an $\mathbb R$-b-divisor 
on a normal variety $X$ and $f\colon Y\to X$ is a proper birational morphism 
from a normal variety $Y$, then the {\em{trace}} of $\mathbf D$ on $Y$ is 
the $\mathbb R$-divisor 
$$
\mathbf D_Y:=\sum_{\text{$\Gamma$ is a divisor on $Y$}}
d_\Gamma \Gamma. 
$$
\end{defn}

\begin{defn}[Canonical b-divisors]\label{p-def3.4}
Let $X$ be a normal variety and let 
$\omega$ be a top rational differential 
form of $X$. 
Then $(\omega)$ defines a 
b-divisor $\mathbf K$. We call $\mathbf K$ 
the {\em{canonical b-divisor}} of $X$. 
\end{defn}

\begin{defn}[$\mathbb R$-Cartier closures]\label{p-def3.5}
The {\em{$\mathbb R$-Cartier 
closure}} of an $\mathbb R$-Cartier $\mathbb R$-divisor 
$D$ on a normal variety $X$ is the $\mathbb R$-b-divisor 
$\overline D$ with trace 
$$
\overline D _Y=f^*D, 
$$
where $f \colon Y\to X$ is a proper birational morphism 
from a normal variety $Y$. 
\end{defn}

\begin{defn}[{\cite[Definition 2.12]{fujino30}}]\label{p-def3.6}
Let $X$ be a normal variety. 
An $\mathbb R$-b-divisor $\mathbf D$ of $X$ 
is {\em{b-potentially nef}} 
(resp.~{\em{b-semi-ample}}) if there 
exists a proper birational morphism $X'\to X$ from a normal 
variety $X'$ such that $\mathbf D=\overline {\mathbf D_{X'}}$, that 
is, $\mathbf D$ is the $\mathbb R$-Cartier 
closure of $\mathbf D_{X'}$, and that 
$\mathbf D_{X'}$ is potentially nef 
(resp.~semi-ample). 
An $\mathbb R$-b-divisor $\mathbf D$ 
of $X$ is {\em{$\mathbb R$-b-Cartier}} 
if there is a proper birational morphism $X'\to X$ from a normal 
variety $X'$ such that $\mathbf D=\overline{\mathbf D_{X'}}$. 
\end{defn}

\begin{defn}\label{p-def3.7}
Let $X$ be an equidimensional reduced scheme. 
Note that $X$ is not necessarily regular in codimension one. 
Let $D$ be an $\mathbb R$-divisor (resp.~a $\mathbb Q$-divisor), 
that is, 
$D$ is a finite formal sum $\sum _i d_iD_i$, where 
$D_i$ is an irreducible reduced closed subscheme of $X$ of 
pure codimension one and $d_i\in \mathbb R$ 
(resp.~$d_i\in \mathbb Q$) for every $i$ 
such that $D_i\ne D_j$ for $i\ne j$. 
We put 
\begin{equation*}
D^{<1} =\sum _{d_i<1}d_iD_i, \quad 
D^{= 1}=\sum _{d_i= 1} D_i, \quad 
D^{>1} =\sum _{d_i>1}d_iD_i, \quad \text{and} \quad 
\lfloor D\rfloor =\sum _i \lfloor d_i \rfloor D_i, 
\end{equation*}
where 
$\lfloor d_i\rfloor$ is the integer defined by $d_i-1< 
\lfloor d_i\rfloor \leq d_i$. 
We note that $\lceil D\rceil =-\lfloor -D\rfloor$ and 
$\{D\}=D-\lfloor D\rfloor$. Similarly, we put 
$$
D^{\geq 1}=\sum _{d_i\geq 1}d_i D_i. 
$$

Let $D$ be an $\mathbb R$-divisor (resp.~a $\mathbb Q$-divisor) 
as above. 
We call $D$ a {\em{subboundary}} $\mathbb R$-divisor 
(resp.~$\mathbb Q$-divisor) 
if $D=D^{\leq 1}$ holds. 
When $D$ is effective and $D=D^{\leq 1}$ holds, 
we call $D$ a {\em{boundary}} 
$\mathbb R$-divisor (resp.~$\mathbb Q$-divisor). 
\end{defn}

The following definition is 
standard and is well known. 

\begin{defn}[Singularities of pairs]\label{p-def3.8}
Let $X$ be a variety and let $E$ be a prime divisor on $Y$ 
for some proper birational
morphism $f\colon Y\to X$ from a normal variety $Y$. 
Then $E$ is called a divisor {\em{over}} $X$. 
A {\em{normal pair}} $(X, \Delta)$ consists of 
a normal variety $X$ and an $\mathbb R$-divisor $\Delta$ on $X$ 
such that $K_X+\Delta$ is $\mathbb R$-Cartier. 
Let $(X, \Delta)$ be a normal pair and let 
$f\colon Y\to X$ be a proper 
birational morphism from a normal variety $Y$. 
Then we can write 
$$
K_Y=f^*(K_X+\Delta)+\sum _E a(E, X, \Delta)E
$$ 
with 
$$f_*\left(\underset{E}\sum a(E, X, \Delta)E\right)=-\Delta, 
$$ 
where $E$ runs over prime divisors on $Y$. 
We call $a(E, X, \Delta)$ the {\em{discrepancy}} of $E$ with 
respect to $(X, \Delta)$. 
Note that we can define the discrepancy $a(E, X, \Delta)$ for 
any prime divisor $E$ over $X$ by taking a suitable 
resolution of singularities of $X$. 
If $a(E, X, \Delta)\geq -1$ (resp.~$>-1$) for 
every prime divisor $E$ over $X$, 
then $(X, \Delta)$ is called {\em{sub log canonical}} (resp.~{\em{sub 
kawamata log terminal}}). 
We further assume that $\Delta$ is effective. 
Then $(X, \Delta)$ is 
called {\em{log canonical}} 
and {\em{kawamata log terminal}} 
if it is sub log canonical and sub kawamata log terminal, respectively. 

Let $(X, \Delta)$ be a log canonical pair. If there 
exists a projective birational morphism 
$f\colon Y\to X$ from a smooth variety $Y$ such that 
both $\Exc(f)$, 
the exceptional locus of $f$,  
and  $\Exc(f)\cup \Supp f^{-1}_*\Delta$ are simple 
normal crossing divisors on $Y$ and that 
$a(E, X, \Delta)>-1$ holds for every 
$f$-exceptional divisor $E$ on $Y$, 
then $(X, \Delta)$ is 
called {\em{divisorial log terminal}} ({\em{dlt}}, for short). 
\end{defn}

\begin{defn}[Log canonical centers, non-lc loci, and so on]\label{p-def3.9}
Let $(X,\Delta)$ be a normal pair. 
If there exist a projective birational morphism 
$f\colon Y\to X$ from a normal variety $Y$ and a prime divisor $E$ on $Y$ 
such that $(X, \Delta)$ is 
sub log canonical in a neighborhood of the 
generic point of $f(E)$ and that 
$a(E, X, \Delta)=-1$, then $f(E)$ is called a 
{\em{log canonical center}} of 
$(X, \Delta)$. 

From now on, we further assume that 
$\Delta$ is effective. 
Let $f\colon Y\to X$ be a resolution with
$$
K_Y+\Delta_Y=f^*(K_X+\Delta)
$$
such that $\Supp \Delta_Y$ is a simple normal crossing divisor on $Y$. 
We put
$$
\mathcal J(X, \Delta)=f_*\mathcal O_Y(-\lfloor \Delta_Y\rfloor). 
$$
Then $\mathcal J(X, \Delta)$ is an ideal sheaf on $X$ 
and is known as the {\em{multiplier ideal sheaf}} 
associated to the pair $(X, \Delta)$. 
It is independent 
of the resolution $f\colon Y\to X$. The closed subscheme $\Nklt(X, \Delta)$ 
defined by $\mathcal J(X, \Delta)$ is called 
the {\em{non-klt locus}} of $(X, \Delta)$. 
It is obvious that $(X, \Delta)$ is kawamata log terminal 
if and only if $\mathcal J(X, \Delta)=\mathcal O_X$. 
Similarly, we put 
$$\mathcal J_{\NLC}(X, \Delta)=f_*\mathcal O_X(-\lfloor 
\Delta_Y\rfloor+\Delta^{=1}_Y)$$
and call it the {\em{non-lc ideal sheaf}} associated to the pair $(X, \Delta)$. 
We can check that it is independent of the resolution $f\colon Y\to X$. The
closed subscheme $\Nlc(X, \Delta)$ defined by 
$\mathcal J_{\NLC}(X, \Delta)$ 
is called the {\em{non-lc locus}} of 
$(X, \Delta)$. It is obvious that $(X, \Delta)$ is log canonical if and only if 
$\mathcal J_{\NLC}(X, \Delta)=\mathcal O_X$. 
By definition, the natural inclusion 
$$
\mathcal J(X, \Delta)\subset \mathcal J_{\NLC}(X, \Delta)
$$ 
always holds. Therefore, 
we have 
$$
\Nlc(X, \Delta)\subset \Nklt(X, \Delta). 
$$
\end{defn}

For the details of $\mathcal J_{\NLC}(X, \Delta)$, 
we recommend the reader to see \cite{fujino9} and \cite{fujino14}. 

\section{Classical basepoint-free theorems}\label{p-sec4}

In this section, we will reformulate some classical results 
in order to apply them to the proof of Theorem \ref{q-thm1.1}. 
Everything in this section can be proved by the X-method. 
For the details of the X-method, see \cite{kmm},  
\cite{kollar-mori}, and \cite{matsuki}. 
We note that this section is similar to \cite[Section 2]{fujino16}. 

\medskip 

Let us start with Shokurov's nonvanishing theorem. 

\begin{thm}[Shokurov's nonvanishing theorem]\label{p-thm4.1} 
Let $X$ be a smooth variety and let $B$ be an $\mathbb R$-divisor 
on $X$ such that 
$\Supp B$ is a simple normal crossing divisor on $X$ with 
$\lfloor B\rfloor \leq 0$. 
Let $\pi\colon X\to S$ be a proper morphism 
to a scheme $S$ and let $D$ be a $\pi$-nef Cartier divisor on $X$. 
Assume that $aD-(K_X+B)$ is nef and big over $S$ for 
some positive integer $a$. 
Then $\pi_*\mathcal O_X(mD+\lceil -B\rceil)\ne 0$ for 
every $m\gg 0$. 
\end{thm}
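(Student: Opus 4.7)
The plan is to carry out Kawamata's X-method in the relative setting, essentially reproducing the classical proof of Shokurov's nonvanishing theorem (see \cite{kmm,kollar-mori,matsuki}) with minor bookkeeping for the $\mathbb R$-subboundary $B$ satisfying $\lfloor B\rfloor\leq 0$. Since the conclusion is local on $S$, I would shrink $S$ to assume it is affine. The relative Kodaira lemma applied to the nef and $\pi$-big $\mathbb R$-divisor $aD-(K_X+B)$ yields a decomposition
$$aD-(K_X+B)\sim_{\mathbb R}A+E,$$
with $A$ a $\pi$-ample $\mathbb R$-divisor and $E$ an effective $\mathbb R$-divisor. After a further log resolution of $(X,B+E)$ I may assume $\Supp(B+E)$ is a simple normal crossing divisor on $X$; the condition $\lfloor B\rfloor\leq 0$ is preserved on the new model.

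Next, I would fix a very general closed point $x_0$ in a general fiber of $\pi$. For sufficiently divisible $q\in\mathbb Z_{>0}$, the divisor $qA$ is $\pi$-very ample, so a general member $G$ of $|qA|$ over $S$ can be chosen with arbitrarily large multiplicity at $x_0$. Tuning small positive rationals $t_1,t_2$ and applying Kawamata's tie-breaking trick, I would arrange that the pair $(X,B+t_1E+t_2G)$ has exactly one log canonical place above $x_0$, with center a subvariety $Z\subseteq X$ containing $x_0$. A direct calculation gives
$$mD-(K_X+B+t_1E+t_2G)\sim_{\mathbb R}(m-a)D+(1-t_2q)A+(1-t_1)E,$$
which, for $m\geq a$ and $t_2<1/q$, is the sum of a $\pi$-nef, a $\pi$-ample, and an effective $\mathbb R$-divisor, hence $\pi$-nef and $\pi$-big.

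Then I would apply relative Kawamata--Viehweg vanishing on a log resolution of $(X,B+t_1E+t_2G)$, combined with the short exact sequence associated to the unique log canonical place, to produce a surjection
$$\pi_*\mathcal O_X(mD+\lceil -B\rceil)\twoheadrightarrow\pi_*\bigl(\mathcal O_Z\otimes\mathcal O_X(mD+\lceil -B\rceil)\bigr).$$
If $\dim Z=0$ the target is manifestly nonzero; otherwise I would conclude by induction on $\dim X$, the restricted data satisfying the hypotheses of the theorem by adjunction. Carrying out this construction afresh for each large $m$, with $t_1,t_2$ chosen small enough that $\lfloor B+t_1E+t_2G\rfloor=\lfloor B\rfloor$ away from $Z$, then yields the conclusion for every $m\gg 0$.

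The main obstacle is the coordinated setup at the heart of the X-method: arranging a unique log canonical place over $x_0$ via tie-breaking so that the adjunction sequence is clean, keeping the coefficient divisor $\pi$-nef and $\pi$-big so that Kawamata--Viehweg applies relatively, and guaranteeing that the lifted section on $Z$ is nonzero --- which ultimately rests on the induction on $\dim X$ together with the trivial base case $\dim Z=0$.
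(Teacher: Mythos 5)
Your overall strategy (X-method with tie-breaking, Kawamata--Viehweg vanishing, induction on dimension) is the right family of ideas, and for comparison the paper itself only sketches this theorem: it reduces to $S$ a point via the Stein factorization and a sufficiently general fiber, then uses a resolution and Kodaira's lemma to reduce to $X$ projective with $aD-(K_X+B)$ ample and $B$ a $\mathbb Q$-divisor, and cites the classical nonvanishing theorem. But your reproof of the classical statement has a genuine gap at its heart. You propose to create a log canonical center through a very general point $x_0$ using a member $G$ of $|qA|$ with large multiplicity at $x_0$, weighted by $t_2<1/q$. First, a \emph{general} member of a very ample system is smooth at $x_0$; to find a special member with $\mult_{x_0}G\geq b$ one must compare $h^0(X,qA)\sim q^n A^n/n!$ with the number of conditions $\sim b^n/n!$, which only yields $b\lesssim q\,(A^n)^{1/n}$. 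Combined with the constraint $t_2q\leq 1$ forced by your positivity computation, you get $t_2\,\mult_{x_0}G\lesssim (A^n)^{1/n}$, which need not exceed $n=\dim X$, so no log canonical center at $x_0$ can be produced this way in general. The whole point of Shokurov's argument is to use members of $|pD+\lceil A\rceil|$ for $p\gg 0$, where the needed multiplicity is a \emph{fixed} bound and the growth of $h^0$ comes from $D$ being nef and not numerically trivial; and the complementary case $D\equiv 0$ (over $S$, on the general fiber) is handled separately by a Riemann--Roch/Euler-characteristic argument, $h^0(mD+\lceil -B\rceil)=\chi(mD+\lceil -B\rceil)=\chi(\lceil -B\rceil)\neq 0$. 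Your proposal contains neither the dichotomy nor the $D\equiv 0$ case, so the engine that actually produces a section is missing.

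Two further points would also need repair. Your induction restricts to the log canonical center $Z\subset X$, which may be singular and for which the restricted data is not literally of the assumed form ($K_Z$, snc support, etc.); the classical proof instead restricts to the divisorial log canonical place $F$ on the log resolution, where smoothness, the snc condition, and ampleness of the restricted auxiliary divisor are automatic, and then identifies pushforwards to descend the section. Finally, your use of a relative Kodaira lemma producing a $\pi$-ample $A$ tacitly assumes $\pi$ is projective, which is not part of the hypotheses; this is exactly why the paper first reduces to $S$ a point (and then to projective $X$ via Chow's lemma, resolution, and Kodaira's lemma) before invoking the classical theorem.
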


\begin{proof}[Sketch of Proof]
By taking the Stein factorization and considering a sufficiently general 
fiber of $\pi\colon X\to S$, we may assume that $S$ is a point. 
By taking a resolution of singularities of $X$ and using 
Kodaira's lemma, we may further assume that 
$X$ is projective, $aD-(K_X+B)$ is ample, and 
$B$ is a $\mathbb Q$-divisor. 
In this case, the statement is well known. 
For the details, see \cite{kmm}, 
\cite{kollar-mori}, and \cite{matsuki}.  
\end{proof}

For the proof of Theorem \ref{q-thm1.1}, the following formulation of 
the Kawamata--Shokurov basepoint-free theorem is very useful. 

\begin{thm}[Kawamata--Shokurov 
basepoint-free theorem]\label{p-thm4.2} 
Let $X$ be a smooth variety and let $B$ be an $\mathbb R$-divisor 
on $X$ such that 
$\Supp B$ is a simple normal crossing divisor on $X$ with 
$\lfloor B\rfloor \leq 0$. 
Let $\pi\colon X\to S$ be a proper morphism 
to a scheme $S$ and let $D$ be a $\pi$-nef Cartier divisor on $X$. 
Assume the following conditions: 
\begin{itemize}
\item[(1)] $aD-(K_X+B)$ is nef and big over $S$ for some positive 
integer $a$, and 
\item[(2)] there exists a positive integer $k$ such that the 
natural inclusion 
\begin{equation*}
\pi_*\mathcal O_X(lD)\hookrightarrow \pi_*\mathcal O_X(lD+\lceil -B\rceil)
\end{equation*} 
is an isomorphism for every $l\geq k$. 
\end{itemize}
Then $\mathcal O_X(mD)$ is $\pi$-generated for every $m\gg 0$. 
\end{thm}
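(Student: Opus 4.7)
The plan is to run Kawamata's X-method, following the classical proof of the Kawamata--Shokurov basepoint-free theorem, with hypothesis (2) playing the role of letting us freely absorb the round-up $\lceil -B \rceil$ whenever it arises in cohomological computations.

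First I would reduce to the case where $S$ is affine, so that $\pi$-generation of $\mathcal{O}_X(mD)$ becomes genuine global generation on $X$. Combining Theorem \ref{p-thm4.1} with hypothesis (2) immediately yields
$$
\pi_*\mathcal{O}_X(mD)\;\cong\;\pi_*\mathcal{O}_X(mD+\lceil -B\rceil)\;\neq\;0
$$
for all $m\gg 0$, so sections of $mD$ over $S$ exist in abundance. Suppose for contradiction that $\mathcal{O}_X(mD)$ fails to be $\pi$-generated for infinitely many $m$; we must then produce a section of $\mathcal{O}_X(mD)$ not vanishing at a prescribed point $x$ in the relative base locus.

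The core of the argument is the X-method. Using hypothesis (1) and Kodaira's lemma over $S$, write $aD-(K_X+B)\sim_{\mathbb R,S} H+E$ with $H$ $\pi$-ample and $E$ effective. For a large integer $m_1$, take a nonzero section of $m_1D$ to build an effective auxiliary divisor $M\sim_{\mathbb R,S} m_1D$ with large multiplicity at $x$. For a suitable rational $t>0$ and small $\eta>0$, the pair $(X,\, B+tM+\eta(H+E))$ then has a non-klt center through $x$. A standard tie-breaking perturbation (blow up to a log resolution, modify the coefficients along exceptional divisors using the $\pi$-ample part) produces a new sub-boundary $B^{*}$ with $\lfloor B^{*}\rfloor\le 0$ outside a single component $F$ with coefficient $1$, such that $\Supp B^{*}$ is simple normal crossing, $F$ is the unique log canonical place lying over $x$, $\lceil -B^{*}\rceil\ge\lceil -B\rceil$, and $mD-(K_X+B^{*})$ remains nef and big over $S$ for all $m$ large.

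Then, by Kawamata--Viehweg vanishing applied to the log resolution and restriction to the center $Z:=f(F)$, one obtains the surjection
$$
\pi_*\mathcal{O}_X\!\left(mD+\lceil -B^{*}\rceil\right)\;\twoheadrightarrow\;\pi_*\!\left(\mathcal{O}_Z\otimes\mathcal{O}_X(mD+\lceil -B^{*}\rceil)\right)
$$
for all $m\gg 0$. Because $Z$ is of strictly lower dimension, an inductive application of Theorem \ref{p-thm4.1} (or direct construction) gives a section on $Z$ not vanishing at $x$; lifting and invoking hypothesis (2) together with $\lceil -B^{*}\rceil\ge\lceil -B\rceil$ then produces a section of $\pi_*\mathcal{O}_X(mD)$ that does not vanish at $x$, the desired contradiction. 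A standard argument upgrades basepoint-freeness for some large $m$ to every $m\gg 0$ by combining two coprime values.

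The main obstacle is the perturbation step: one must tune $t$ and $\eta$ so that exactly one log canonical center survives through $x$ while preserving simple normal crossings and the inequality $\lceil -B^{*}\rceil\ge\lceil -B\rceil$ required to feed the output back through hypothesis (2). The latter condition is precisely what makes this reformulation (with possibly non-effective $B$ satisfying $\lfloor B\rfloor\le 0$) reduce cleanly to the classical setting, and it must be respected at every stage of the perturbation.
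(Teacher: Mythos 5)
Your plan deviates from the classical X-method at the crucial step, and the deviation introduces a genuine gap. The paper's proof (and the standard KMM argument) never works with a prescribed point $x$ of the base locus: after a resolution it writes $\ell^{n_0}D=L+\sum_j r_jF_j$ with $L$ $\pi$-free, tie-breaks so that one \emph{divisorial} component $F$ of the base locus acquires coefficient one, and then only needs Shokurov nonvanishing on $F$ (a smooth divisor with snc boundary, where Theorem \ref{p-thm4.1} applies verbatim) to conclude that $F$ drops out of the base locus; Noetherian induction on the cokernel of $\pi^*\pi_*\mathcal O_X(\ell^nD)\to\mathcal O_X(\ell^nD)$, run for two distinct primes and combined via Lemma \ref{p-lem4.3}, finishes the proof. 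Your version instead tries to produce a section not vanishing at a chosen point $x$, which forces two steps you cannot carry out. First, you propose to take $M\sim_{\mathbb R,S} m_1D$ with large multiplicity at $x$; since $D$ is only $\pi$-nef (not big), $h^0$ of multiples of $D$ need not grow, and a member of $|m_1D|$ through $x$ has no reason to have multiplicity bigger than one there, so you cannot force a non-klt center through $x$ by a multiplicity count. Second, even if you create a non-klt center $Z\ni x$ (say via the log canonical threshold), your inductive step requires a section on $Z$ \emph{not vanishing at the prescribed point $x$}; Theorem \ref{p-thm4.1} only gives a nonzero section, and upgrading nonvanishing to pointwise generation on a (possibly singular, arbitrary-codimension) center is exactly the hard Angehrn--Siu-type problem, which needs subadjunction and ampleness that are not available here. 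This is why the classical argument restricts to a divisor $F$ on the resolution and asks only for a section not vanishing identically on $F$.

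There is also a sign issue in how you feed the output back through hypothesis (2): you require $\lceil -B^{*}\rceil\ge\lceil -B\rceil$, but the containment you need is the opposite one. In the paper's Step \ref{p-step4.2.3} the perturbed boundary satisfies $0\le\lceil -B'\rceil\le\lceil -B\rceil$, so that
\begin{equation*}
\pi_*\mathcal O_X(lD)\hookrightarrow \pi_*\mathcal O_X(lD-\lfloor B'\rfloor)\hookrightarrow \pi_*\mathcal O_X(lD+\lceil -B\rceil)
\end{equation*}
and (2) forces all three to coincide; this is what lets the lifted section be a section of $\mathcal O_X(lD)$ itself. With your inequality the middle group could be strictly larger than $\pi_*\mathcal O_X(lD+\lceil -B\rceil)$ and (2) gives nothing. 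So the overall strategy must be reorganized along the divisorial base-locus/Noetherian-induction lines before the remaining ingredients (Kawamata--Viehweg vanishing, nonvanishing, two coprime powers) can be assembled as you intend.
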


We give a detailed proof for the sake of completeness. 
We note that the proof of the Kawamata--Shokurov 
basepoint-free theorem, which is 
now usually called the X-method, works 
without any changes 
(see \cite{kmm}, \cite{kollar-mori}, and 
\cite{matsuki}), although our treatment looks slightly 
different from the original one. 

\begin{proof}[Proof of Theorem \ref{p-thm4.2}]
We will use Shokurov's nonvanishing theorem 
(see Theorem \ref{p-thm4.1}), the Kawamata--Viehweg vanishing 
theorem, and Hironaka's resolution of singularities. 
\setcounter{step}{0}
 \begin{step}\label{p-step4.2.1}
 By (1) and Theorem \ref{p-thm4.1}, 
 $\pi_*\mathcal O_X(mD+\lceil -B\rceil)\ne 0$ for 
 every $m\gg 0$. 
 Hence, by (2), $\pi_*\mathcal O_X(mD)=
 \pi_*\mathcal O_X(mD+\lceil -B\rceil)\ne 0$ holds for 
 every $m\gg 0$. 
 Let $\ell$ be any prime number. 
 Then $\pi_*\mathcal O_X(\ell ^{n_0}D)=
 \pi_*\mathcal O_X(\ell^{n_0}D+\lceil -B\rceil)\ne 0$ for 
 some sufficiently large positive integer $n_0$. 
 \end{step}
 \begin{step}\label{p-step4.2.2} 
 Let $f\colon X'\to X$ be a projective birational morphism 
 from a smooth variety $X'$ such that 
 $K_{X'}+B'=f^*(K_X+B)$ and that 
 $\Supp B'$ is a simple normal crossing divisor on $X'$. 
 Then we can check that 
 $f_*\mathcal O_{X'}(\lceil -B'\rceil )=\mathcal O_X(\lceil 
 -B\rceil)$. 
 Hence, we can replace $(X, B)$ and $D$ with 
 $(X', B')$ and $f^*D$, respectively. 
 Therefore, we may assume that 
 there exists a simple normal crossing divisor 
 $G$ on $X$ such that 
$G=\sum _j F_j$ is the irreducible decomposition with 
the following properties: 
\begin{itemize}
\item the support of $G+\Supp B$ is contained in a 
simple normal crossing divisor on $X$, 
\item $\ell ^{n_0}D=L+\sum _j r_j F_j$ 
for some nonnegative integers $r_j$ 
and a $\pi$-free Cartier divisor $L$ on $X$ such that 
$$
\pi_*\mathcal O_X(\ell^{n_0}D)=\pi_*\mathcal O_X(L), 
$$ 
and 
\item $aD-(K_X+B)-\sum _j p_j F_j$ is ample over 
$S$ for suitable $0<p_j\ll 1$. 
\end{itemize}
\end{step}
 \begin{step}\label{p-step4.2.3} 
 We perturb $p_j$ suitably and choose $c>0$ 
such that 
\begin{equation*}
\left(B+c\sum _j r_j F_j+\sum _j p_j F_j\right)^{>1}=0
\end{equation*}
and 
\begin{equation*}
\left(B+c\sum _j r_j F_j+\sum _j p_j F_j\right)^{=1}
\end{equation*} 
is 
a prime divisor $F$ on $X$. 
We put 
\begin{equation*}
F+B'=B+c\sum _j r_j F_j+\sum _j p_j F_j. 
\end{equation*}
Then, by construction, we see that $\lfloor F+B'\rfloor=F+\lfloor B'\rfloor$ 
and that $0\leq \lceil -B'\rceil \leq \lceil -B\rceil$ holds. 
 \end{step}
 \begin{step}\label{p-step4.2.4}
Let $n_1$ be a positive integer such that 
$\ell^{n_1}\geq c\ell ^{n_0}+a$. 
We consider an $\mathbb R$-divisor 
\begin{equation*}
\begin{split}
N(\ell ^{n_1})&:=\ell ^{n_1}D-(K_X+F+B')\\ 
&= (\ell^{n_1}-c\ell^{n_0}-a)D+cL+\left(aD-(K_X+B)-\sum _j p_j F_j\right). 
\end{split} 
\end{equation*} 
Hence, $N(\ell^{n_1})$ is ample over $S$. 
By the Kawamata--Viehweg vanishing theorem, 
$$
R^1\pi_*\mathcal O_X(K_X+\lceil N(\ell^{n_1})\rceil)=
R^1\pi_*\mathcal O_X(\ell^{n_1}D-F-\lfloor B'\rfloor)=0. 
$$ 
Therefore, the restriction map 
\begin{equation}\label{p-eq4.1}
\pi_*\mathcal O_X(\ell^{n_1}D)=
\pi_*\mathcal O_X(\ell^{n_1}D-\lfloor B'\rfloor)
\to \pi_*\mathcal O_F(\ell^{n_1}D+\lceil -B'|_F\rceil)
\end{equation}
is surjective, where the equality 
in \eqref{p-eq4.1} follows from (2) and 
\begin{equation*}
\pi_*\mathcal O_X(\ell^{n_1}D)\hookrightarrow 
\pi_*\mathcal O_X(\ell^{n_1}D-\lfloor B'\rfloor)
\hookrightarrow 
\pi_*\mathcal O_X(\ell^{n_1}D-\lfloor B\rfloor). 
\end{equation*}
By construction, 
\begin{equation*}
\begin{split}
N(\ell^{n_1})|_F&=\ell^{n_1}D|_F-(K_X+F+B')|_F\\ 
&=\ell^{n_1}D|_F-(K_F+B'|_F)
\end{split} 
\end{equation*} 
is ample over $S$, $\Supp B'|_F$ is a simple 
normal crossing divisor on $F$, 
and $\lfloor B'|_F\rfloor\leq 0$ holds. 
By Theorem \ref{p-thm4.1}, 
we obtain that 
$\pi_*\mathcal O_F(\ell^{n_1}D+\lceil -B'|_F\rceil)\ne 0$ for 
every $n_1\gg 0$. 
Thus we have 
\begin{equation}\label{p-eq4.2}
F\not \subset \Supp \left(
\Coker \left(\pi^*\pi_*\mathcal O_X(\ell^{n_1}D)
\to \mathcal O_X(\ell^{n_1}D)\right)\right). 
\end{equation} 
Without loss of generality, we may assume that 
$n_1\geq n_0$. 
Hence, \eqref{p-eq4.2} implies that 
\begin{equation*}
\begin{split}
&\Supp 
\left(\Coker \left(\pi^*\pi_*\mathcal O_X(\ell^{n_1}D)
\to \mathcal O_X(\ell^{n_1}D)\right) \right)
\\ &\subsetneq 
\Supp \left(
\Coker \left(\pi^*\pi_*\mathcal O_X(\ell^{n_0}D)
\to \mathcal O_X(\ell^{n_0}D)\right)
\right)
\end{split}
\end{equation*}
since $F\subset \Supp \left(
\Coker \left(\pi^*\pi_*\mathcal O_X(\ell^{n_0}D)
\to \mathcal O_X(\ell^{n_0}D)\right)
\right)$ by construction. 
By Noetherian induction, 
\begin{equation*}
\Supp \left(
\Coker \left(\pi^*\pi_*\mathcal O_X(\ell^nD)
\to \mathcal O_X(\ell^nD)\right)\right)=\emptyset, 
\end{equation*} 
that is, $\mathcal O_X(\ell ^nD)$ is $\pi$-generated, 
for every $n\gg 0$. 
 \end{step}
 \begin{step}\label{p-step4.2.5} 
 We take another prime number $\ell'$. 
 Then, by Step \ref{p-step4.2.4}, 
 $\mathcal O_X({\ell'}^{n'}D)$ is $\pi$-generated for 
 every $n'\gg 0$. 
We may 
 assume that 
$\ell^n<\ell'^{n'}$ holds by taking $\ell$, $\ell'$, $n$, and $n'$ suitably. 
Note that $\gcd (\ell^n, \ell'^{n'})=1$ since 
$\ell \ne \ell'$. 
We put $m_0=
\ell^n \left(\ell'^{n'} -\left\lceil \frac{\ell'^{n'}}{\ell^n}\right\rceil\right)$. 
By Lemma \ref{p-lem4.3} below, for every positive integer $m$ with 
$m\geq m_0$, there exist nonnegative integers $u$ and $v$ such that 
$m=u\ell^n+v\ell'^{n'}$. 
This implies that $\mathcal O_X(mD)$ is $\pi$-generated for 
every $m\geq m_0$. 
 \end{step}
 We finish the proof. 
\end{proof}

We have already used the following easy lemma in 
the proof of Theorem \ref{p-thm4.2}. 
We give a proof for the sake of completeness.  

\begin{lem}\label{p-lem4.3} 
Let $a$ and $b$ be positive integers with $1<a<b$ such 
that $\gcd (a, b)=1$. Then, for any positive integer $m$ with 
$m\geq a\left(b-\left\lceil \frac{b}{a}\right\rceil\right)$, 
there exist nonnegative integers $u$ and $v$ such that 
$m=ua+vb$. 
\end{lem}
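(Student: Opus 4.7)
The plan is to apply the standard Bezout-style argument. Since $\gcd(a, b) = 1$, every integer $m$ admits some representation $m = ua + vb$ with $u, v \in \mathbb{Z}$, and the full set of integer solutions is obtained from any one by the substitution $(u, v) \mapsto (u + tb, v - ta)$ with $t \in \mathbb{Z}$. I would first select the unique representation with $v \in \{0, 1, \ldots, a - 1\}$, and then set $u = (m - vb)/a$. The task reduces to verifying that this $u$ is nonnegative under the hypothesis $m \geq a(b - \lceil b/a \rceil)$.

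Since $v \leq a - 1$, I have $vb \leq (a-1)b$, hence $u \geq (m - (a-1)b)/a$. Because $u$ is an integer, it is enough to prove the strict bound $u > -1$, that is, $m > ab - a - b$. Substituting the hypothesis $m \geq ab - a\lceil b/a \rceil$, the problem reduces to the inequality $a\lceil b/a \rceil < a + b$. Writing $b = qa + r$ with $0 \leq r < a$, the combination of $\gcd(a, b) = 1$ and $a > 1$ forces $a \nmid b$, so $r \geq 1$ and $\lceil b/a \rceil = q + 1$; therefore $a\lceil b/a \rceil = aq + a = (b - r) + a < a + b$, as required. This gives $u > -1$, hence $u \geq 0$.

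There is no substantive obstacle in this argument. The only point worth emphasizing is that coprimality of $a$ and $b$ (used through $a \nmid b$) is precisely what makes the hypothesis $m \geq ab - a\lceil b/a \rceil$ strictly stronger than the classical Frobenius threshold $m > ab - a - b$, and hence strong enough to promote the integer $u$ from $u > -1$ to $u \geq 0$.
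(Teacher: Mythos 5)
Your proof is correct and follows essentially the same route as the paper: both arguments take the unique representation $m=ua+vb$ with the coefficient of $b$ reduced into $\{0,\ldots,a-1\}$ and then verify that the coefficient of $a$ is nonnegative using the hypothesis $m\geq a\left(b-\left\lceil \frac{b}{a}\right\rceil\right)$. The only cosmetic difference is that you check this by observing the hypothesis is strictly stronger than the Frobenius threshold $ab-a-b$ (using $a\nmid b$) and then invoking integrality of $u$, whereas the paper verifies the same nonnegativity by a direct floor computation.
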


\begin{proof}
We can uniquely write $m=qa+r$ such that 
$q$ and $r$ are integers with 
$q\geq b-\left\lceil \frac{b}{a}\right\rceil$ and 
$0\leq r\leq a-1$. If $r=0$, then it is sufficient to put $u=q$ and $v=0$. 
From now on, we assume $r\ne 0$. 
Then there exists a positive integer $c$ such that 
$cb=\left\lfloor \frac{cb}{a}\right\rfloor a+r$ with 
$1\leq c\leq a-1$. 
Hence $m=\left(q-\left\lfloor \frac{cb}{a}\right\rfloor\right)a+cb$. 
Note that $$q-\left\lfloor 
\frac{cb}{a}\right\rfloor\geq b-\left\lceil \frac{b}{a}\right
\rceil-\left\lfloor \frac{(a-1)b}{a}\right\rfloor=0. 
$$ 
Thus it is sufficient to put $u=q-\left\lfloor \frac{cb}{a}\right\rfloor$ and 
$v=c$. 
\end{proof}

We need a somewhat artificial lemma for the proof of 
Theorem \ref{q-thm1.1}. 

\begin{lem}\label{p-lem4.4}
Let $\pi\colon X\to S$ be a proper morphism from a normal 
variety $X$ to an affine scheme $S$ and let 
$D$ be a $\pi$-nef Cartier divisor on $X$. 
Let $p\colon Z\to X$ be a proper birational morphism 
from a smooth variety $Z$ and let $B$ be an $\mathbb R$-divisor 
on $Z$ such that $\Supp B$ is a simple normal crossing divisor, 
$B^{<0}$ is $p$-exceptional, and $B^{\geq 1}\ne 0$.  
Assume that $ap^*D-(K_Z+B)$ is nef and big over 
$S$ for some positive real number $a$ and 
that $\Bs |mD|\cap p(B^{\geq 1})=\emptyset$ for some 
positive integer $m$, where $\Bs|mD|$ denotes the 
base locus of $|mD|$ with the reduced scheme structure. 
Then there exists a positive integer $s$ such that 
\begin{equation}\label{p-eq4.3}
H^0\left(X, \mathcal O_X(m^sD)\otimes 
p_*\mathcal O_Z(-\lfloor B\rfloor)\right)\otimes \mathcal O_X\to 
\mathcal O_X(m^sD)
\end{equation}
is surjective on $X\setminus p(B^{\geq 1})$. 
Note that $p_*\mathcal O_Z(-\lfloor B\rfloor)$ is an ideal 
sheaf on $X$. 
In particular, $\mathcal O_X(m^sD)$ is $\pi$-generated. 
\end{lem}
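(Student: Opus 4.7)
The plan is to apply Kawamata's X-method, in the form given by the proof of Theorem~\ref{p-thm4.2}, to the pair $(Z,B)$, but keeping track of the ideal sheaf $p_*\mathcal{O}_Z(-\lfloor B\rfloor)$ throughout. The decisive role of the hypothesis $\Bs|mD|\cap p(B^{\geq 1})=\emptyset$ is that, after a suitable log resolution, the divisorial fixed part $F_{nm}$ of $|nm\,p^*D|$ on $Z$ satisfies $\Supp F_{nm}\cap\Supp B^{\geq 1}=\emptyset$, so any new log canonical center produced from $F_{nm}$ by perturbation of $B$ has image disjoint from $p(B^{\geq 1})$; the sections it produces will automatically live in the ideal sheaf $p_*\mathcal{O}_Z(-\lfloor B\rfloor)$.

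First, replace $p\colon Z\to X$ by a higher log resolution (which preserves all hypotheses on $B$) so that, for every $n$ to be used, $nm\,p^*D=M_{nm}+F_{nm}$ with $M_{nm}$ a $p$-free Cartier divisor and $\Supp(F_{nm}+B)$ contained in a simple normal crossing divisor. Because $\Bs|nmD|\subseteq\Bs|mD|$ is disjoint from $p(B^{\geq 1})$, one automatically has $\Supp F_{nm}\cap\Supp B^{\geq 1}=\emptyset$. Next, following Step~\ref{p-step4.2.3} of the proof of Theorem~\ref{p-thm4.2}, choose $c>0$ and small auxiliary coefficients $p_j>0$ on further SNC components $G_j$ so that
$$ B^{\natural}\;:=\;B+cF_{nm}+\sum_j p_j G_j $$
satisfies $(B^{\natural})^{>1}=B^{>1}$, $(B^{\natural})^{=1}=B^{=1}+F$ for a \emph{single} new prime divisor $F\subseteq\Supp F_{nm}$, and $\lfloor B^{\natural}\rfloor=\lfloor B\rfloor+F$; for $n$ sufficiently large the perturbation can be done so that $nm\,p^*D-(K_Z+B^{\natural})$ is ample over $S$ (as in Step~\ref{p-step4.2.4}). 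Crucially, $F\cap\Supp B^{\geq 1}=\emptyset$.

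Then, the Kawamata--Viehweg vanishing theorem applied to the short exact sequence
$$ 0\to\mathcal{O}_Z\bigl(nm\,p^*D-\lfloor B\rfloor-F\bigr)\to\mathcal{O}_Z\bigl(nm\,p^*D-\lfloor B\rfloor\bigr)\to\mathcal{O}_F\bigl((nm\,p^*D-\lfloor B\rfloor)|_F\bigr)\to 0 $$
(whose left-hand sheaf equals $\mathcal{O}_Z(K_Z+\{B^{\natural}\}+(nm\,p^*D-(K_Z+B^{\natural})))$ after bookkeeping, and whose higher $(\pi\circ p)_*$ vanish because $S$ is affine and the twist is ample over $S$) yields a surjection
$$ H^0\bigl(X,\mathcal{O}_X(nmD)\otimes p_*\mathcal{O}_Z(-\lfloor B\rfloor)\bigr)\twoheadrightarrow H^0\bigl(F,\mathcal{O}_F((nm\,p^*D-\lfloor B\rfloor)|_F)\bigr); $$
Shokurov's nonvanishing (Theorem~\ref{p-thm4.1}), applied on the smooth subvariety $F$ with its adjunction boundary, makes the right-hand side nonzero for every $n\gg 0$, and hence some section of $\mathcal{O}_X(nmD)\otimes p_*\mathcal{O}_Z(-\lfloor B\rfloor)$ does not vanish along $p(F)$.

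Finally, as in Step~\ref{p-step4.2.4}, the support of the cokernel of the map \eqref{p-eq4.3} restricted to $X\setminus p(B^{\geq 1})$ strictly decreases at each iteration; Noetherian induction combined with the prime-power trick of Step~\ref{p-step4.2.5} then produces the required $s$. At points of $p(B^{\geq 1})$ one generates $\mathcal{O}_X(m^sD)$ by taking $m^{s-1}$-fold products of sections of $\mathcal{O}_X(mD)$, which suffices because $\Bs|mD|\cap p(B^{\geq 1})=\emptyset$ by hypothesis. The main technical obstacle is the perturbation of the second step: one must carefully arrange that exactly one new prime divisor $F\subseteq\Supp F_{nm}$ is promoted to coefficient $1$, that no other coefficient crosses $1$, and that the identity $\lfloor B^{\natural}\rfloor=\lfloor B\rfloor+F$ holds cleanly, so that the sections produced lie in the ideal-twisted sheaf $\mathcal{O}_X(nmD)\otimes p_*\mathcal{O}_Z(-\lfloor B\rfloor)$ and not in a weaker one. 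This is an SNC-combinatorial matter, but it is exactly where the hypothesis $\Bs|mD|\cap p(B^{\geq 1})=\emptyset$ enters, since it guarantees that no component of $B^{\geq 1}$ sits in $\Supp F_{nm}$.
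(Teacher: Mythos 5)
Your proposal follows essentially the same route as the paper's proof: pass to a higher log resolution on which the fixed part of $|m\,p^*D|$ is divisorial, has simple normal crossing support together with $B$, and is disjoint from $\Supp B^{\geq 1}$ (this is exactly where $\Bs|mD|\cap p(B^{\geq 1})=\emptyset$ enters); tie-break with Kawamata's perturbation to cut out a single new log canonical place $F$ away from $\Supp B^{\geq 1}$; use Kawamata--Viehweg vanishing plus Shokurov nonvanishing on $F$ to produce sections in the ideal-twisted sheaf; and conclude by Noetherian induction, handling the points of $p(B^{\geq 1})$ by $\Bs|m^sD|\subset \Bs|mD|$.

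The one step that would fail as literally written is the identity $\lfloor B^{\natural}\rfloor=\lfloor B\rfloor+F$, on which your vanishing bookkeeping rests and which you yourself flag as the main technical point. It cannot in general be arranged: components of the fixed part $F_{nm}$ may coincide with $p$-exceptional components of $B^{<0}$, and adding $cF_{nm}$ can push such a coefficient across an intermediate integer (say from $-3/2$ to above $-1$), so that $\lfloor B^{\natural}\rfloor\geq \lfloor B\rfloor+F$ with strict inequality somewhere; then $nm\,p^*D-\lfloor B\rfloor-F$ is no longer of the form $K_Z+\{B^{\natural}\}+(\text{ample over }S)$ and Kawamata--Viehweg vanishing does not apply to that sheaf. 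The repair is exactly what the paper does: run the vanishing and the restriction sequence with the perturbed floor, i.e.\ with $\mathcal O_Z(nm\,p^*D-F-\lfloor B'\rfloor)$ where $F+B'=B+c\sum_j r_jF_j+\sum_j p_jF_j$, and only at the end use the inclusion $p_*\mathcal O_Z(-\lfloor B'\rfloor)\subset p_*\mathcal O_Z(-\lfloor B\rfloor)$ (valid because $B'+F\geq B$ and $\coeff _F B<1$) to see that the sections so produced lie in $\mathcal O_X(m^tD)\otimes p_*\mathcal O_Z(-\lfloor B\rfloor)$. With this adjustment your argument coincides with the paper's. Two further small points: the moving part of $|nm\,p^*D|$ is free over $S$ (that is, $\pi\circ p$-free), not $p$-free; and the prime-power trick of Step \ref{p-step4.2.5} is not needed here, since the lemma only asks for a single exponent $s$, so Noetherian induction alone suffices.
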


\begin{proof} The well-known X-method works with some 
minor modifications. 
\setcounter{step}{0}
\begin{step}\label{p-step4.4.1}
Let $q\colon Z'\to Z$ be a projective birational morphism 
from a smooth quasi-projective variety $Z'$ such that 
$K_{Z'}+B_{Z'}=q^*(K_Z+B)$ and that $\Supp B_{Z'}$ is a 
simple normal crossing divisor on $Z'$. 
It is easy to see that $q_*\mathcal O_{Z'}(-\lfloor B_{Z'}\rfloor)
=\mathcal O_Z(-\lfloor B\rfloor)$. 
Hence we can replace $(Z, B)$ and $p\colon Z\to X$ with $(Z', B_{Z'})$ 
and $p\circ q\colon Z'\to X$, respectively. 
Therefore, we may assume that 
there exists a simple normal crossing divisor $G$ on $Z$ such that 
$G=\sum _j F_j$ is the irreducible decomposition with 
the following properties: 
\begin{itemize}
\item[(1)] the support of $G+\Supp B$ is contained in a 
simple normal crossing divisor on $Z$, 
\item[(2)] $p^*mD=L+\sum _j r_j F_j$ for some nonnegative integers $r_j$ 
and a $\pi\circ p$-free Cartier divisor $L$ on $Z$ such that 
$$
H^0(X, \mathcal O_X(mD))=H^0(Z, \mathcal O_Z(L))
$$ 
and that $\Bs |mD|=\bigcup _{r_j>0} p(F_j)$, and 
\item[(3)] $ap^*D-(K_Z+B)-\sum _j p_j F_j$ is ample over 
$S$ for suitable $0<p_j\ll 1$. 
\end{itemize}
\end{step}
\begin{step}\label{p-step4.4.2}
By the usual argument in Kawamata's X-method, 
we can perturb $p_j$ suitably and choose $c>0$ 
such that $$\left(B+c\sum _j r_j F_j+\sum _j p_j F_j\right)^{>1}=0$$ 
on $Z\setminus \Supp B^{\geq 1}$ and 
$$\left(B+c\sum _j r_j F_j+\sum _j p_j F_j\right)^{=1}$$ is 
a prime divisor $F$ on $Z\setminus \Supp B^{\geq 1}$. 
We note that if $r_j>0$ then $F_j$ is disjoint from 
$\Supp B^{\geq 1}$ by the assumption $\Bs|mD|\cap p(B^{\geq 1})=
\emptyset$. 
We put 
$$F+B'=B+c\sum _j r_j F_j+\sum _j p_j F_j. 
$$ 
Then, by construction, $\lfloor F+B'\rfloor=F+\lfloor B'\rfloor$, 
$\Supp (B')^{\geq 1}=\Supp B^{\geq 1}$, 
and $p_*\mathcal O_Z(-\lfloor B'\rfloor)\subset 
p_*\mathcal O_Z(-\lfloor B\rfloor)$. 
Note that $F$ is disjoint from $\Supp (B')^{\geq 1}$ by construction. 
\end{step}
\begin{step}\label{p-step4.4.3}
Let $b$ be a positive integer such that 
$b\geq cm+a$. 
We consider an $\mathbb R$-divisor 
\begin{equation*}
\begin{split}
N(b)&:=bp^*D-(K_Z+F+B')\\ 
&= (b-cm-a)p^*D+cL+\left(ap^*D-(K_Z+B)-\sum _j p_j F_j\right). 
\end{split} 
\end{equation*} 
Therefore, we see that $N(b)$ is ample over $S$. 
By the Kawamata--Viehweg vanishing theorem, 
$$
H^1(Z, \mathcal O_Z(K_Z+\lceil N(b)\rceil))=
H^1(Z, \mathcal O_Z(bp^*D-F-\lfloor B'\rfloor))=0. 
$$ 
Hence the restriction map 
\begin{equation}\label{p-eq4.4}
H^0(Z, \mathcal O_Z(bp^*D-\lfloor B'\rfloor))
\to H^0(F, \mathcal O_F(bp^*D+\lceil -B'|_F\rceil))
\end{equation}
is surjective. 
Note that 
\begin{equation*}
\begin{split}
N(b)|_F&=bp^*D|_F-(K_Z+F+B')|_F\\ 
&=bp^*D|_F-(K_F+B'|_F)
\end{split} 
\end{equation*} 
is ample over $S$. 
By construction, $\Supp B'|_F$ is a simple 
normal crossing divisor on $F$ and $\lfloor B'|_F\rfloor\leq 0$ holds. 
By Theorem \ref{p-thm4.1}, 
we obtain that 
$H^0(F, \mathcal O_F(bp^*D+\lceil -B'|_F\rceil))\ne 0$ for 
every $b\gg 0$. 
Therefore, $H^0(Z, \mathcal O_Z(bp^*D-\lfloor B'\rfloor))$ 
has a section which does not vanish on 
the generic point of $F$ for every $b\gg 0$ 
by the surjection \eqref{p-eq4.4}. 
This means that there exists some positive integer $t$ such that 
\begin{equation*}
H^0\left(X, \mathcal O_X(m^tD)\otimes 
p_*\mathcal O_Z(-\lfloor B\rfloor)\right)\otimes \mathcal O_X\to 
\mathcal O_X(m^tD)
\end{equation*}
is surjective at the generic point of $p(F)$. 
It is obvious that 
$\Bs |m^t D|\cap p(B^{\geq 1})=\emptyset$ holds. 
Hence we can apply the same argument as above 
to $m^tD$. 
Then, by Noetherian induction, we can find a sufficiently 
large and divisible positive integer $s$ such that the map in \eqref{p-eq4.3} 
is surjective. 
\end{step}
\begin{step}\label{p-step4.4.4}
By \eqref{p-eq4.3}, 
$\Bs|m^sD|\cap (X\setminus p(B^{\geq 1}))=\emptyset$. 
On the other hand, $\Bs|mD|\cap p(B^{\geq 1})=\emptyset$ by 
assumption. Hence $\Bs|m^sD|\cap p(B^{\geq 1})=\emptyset$. 
Therefore, we obtain $\Bs|m^sD|=\emptyset$. This means 
that $\mathcal O_X(m^sD)$ is $\pi$-generated. 
\end{step}
We finish the proof of Lemma \ref{p-lem4.4}. 
\end{proof}

We close this section with a remark on the X-method. 

\begin{rem}\label{p-rem4.5}
As is well known, the X-method works very well for kawamata 
log terminal pairs. Precisely speaking, 
the notion of kawamata log terminal pairs was introduced 
to make the X-method work well. 
Unfortunately, however, it is not so powerful for log canonical 
pairs. Hence we proposed a new approach in order 
to prove the cone and contraction theorem for log 
canonical pairs (see \cite{fujino13} and \cite{fujino14}). 
Our approach is based on some vanishing theorems obtained 
by the theory of mixed Hodge structures on cohomology 
with compact support 
(see \cite{fujino7}, \cite{fujino22}, \cite[Chapter 5]{fujino23}, 
and \cite{fujino27}). 
In some sense, it is simpler than the X-method. 
It is mysterious that the proof of Theorem \ref{q-thm1.1} 
given in this paper needs the X-method.   
\end{rem}

\section{Quasi-log schemes and basic slc-trivial fibrations}\label{q-sec5}

In this section, let us quickly review the theory of quasi-log schemes 
and the framework of basic slc-trivial fibrations. 

Let $Y$ be a simple normal crossing divisor 
on a smooth 
variety $M$ and let $B$ be an $\mathbb R$-divisor 
on $M$ such that 
$\Supp (B+Y)$ is a simple normal crossing divisor on $M$ and that 
$B$ and $Y$ have no common irreducible components. 
We put $B_Y=B|_Y$ and consider the pair $(Y, B_Y)$. 
We call $(Y, B_Y)$ a {\em{globally embedded simple normal 
crossing pair}} and $M$ the {\em{ambient space}} 
of $(Y, B_Y)$. A {\em{stratum}} of $(Y, B_Y)$ is a log canonical  
center of $(M, Y+B)$ that is contained in $Y$. 

\medskip 

Let us recall the definition of {\em{quasi-log schemes}}. 

\begin{defn}[{Quasi-log 
schemes, see \cite[Definition 6.2.2]{fujino23}}]\label{q-def5.1}
A {\em{quasi-log scheme}} is a scheme $X$ with an 
$\mathbb R$-Cartier divisor 
(or $\mathbb R$-line bundle) 
$\omega$ on $X$, a closed subscheme 
$X_{-\infty}\subsetneq X$, and a finite collection $\{C\}$ of 
subvarieties of $X$ such that there exists a 
proper morphism $f\colon (Y, B_Y)\to X$ from a globally 
embedded simple 
normal crossing pair satisfying the following properties: 
\begin{itemize}
\item[(1)] $f^*\omega\sim_{\mathbb R}K_Y+B_Y$ holds,  
\item[(2)] the natural map 
$\mathcal O_X
\to f_*\mathcal O_Y(\lceil -(B_Y^{<1})\rceil)$ 
induces an isomorphism 
$$
\mathcal I_{X_{-\infty}}\overset{\simeq}{\longrightarrow} 
f_*\mathcal O_Y(\lceil 
-(B_Y^{<1})\rceil-\lfloor B_Y^{>1}\rfloor),  
$$ 
where $\mathcal I_{X_{-\infty}}$ is the defining ideal sheaf of 
$X_{-\infty}$, and 
\item[(3)] the collection of subvarieties 
$\{C\}$ coincides with the images 
of the strata of $(Y, B_Y)$ that are not included in $X_{-\infty}$. 
\end{itemize}
If there is no risk of confusion, 
then we simply write $[X, \omega]$ to denote 
the above data 
$$
\left(X, \omega, f\colon (Y, B_Y)\to X\right). 
$$ 
The subvarieties $C$ 
are called the {\em{qlc strata}} of $[X, \omega]$ 
and $X_{-\infty}$ is called the {\em{non-qlc locus}} 
of $[X, \omega]$. 
Note that a qlc stratum $C$ of $[X, \omega]$ is an 
irreducible and reduced closed subscheme of $X$. 
We usually use $\Nqlc(X, 
\omega)$ 
to denote 
$X_{-\infty}$. 
If a qlc stratum $C$ of $[X, \omega]$ is not an 
irreducible component of $X$, then 
it is called a {\em{qlc center}} of $[X, \omega]$. 
The union of $X_{-\infty}$ with all 
qlc centers of $[X, \omega]$ is denoted by 
$\Nqklt(X, \omega)$. 

We say that $(X, \omega, f\colon (Y, B_Y)\to X)$ or $[X, \omega]$ 
has a {\em{$\mathbb Q$-structure}} 
if $B_Y$ is a $\mathbb Q$-divisor, $\omega$ is a $\mathbb Q$-Cartier 
divisor 
(or $\mathbb Q$-line bundle), and 
$f^*\omega\sim _{\mathbb Q} K_Y+B_Y$ holds in 
the above definition. 
\end{defn}

We need the notion of nef and log big $\mathbb R$-divisors 
in order to formulate the basepoint-free theorem of 
Reid--Fukuda type. 

\begin{defn}[Nef and log bigness]\label{q-def5.2}
Let $L$ be an $\mathbb R$-Cartier divisor 
(or $\mathbb R$-line bundle) on a quasi-log scheme $[X, \omega]$ 
and let $\pi\colon X\to S$ be a proper 
morphism between schemes. 
Then $L$ is said to be {\em{nef and log big over $S$ 
with respect to $[X, \omega]$}} 
if $L$ is $\pi$-nef and $L|_{W}$ is 
$\pi$-big for every qlc stratum $W$ of $[X, \omega]$. 
\end{defn}

The following example is very important. 
By this example, we can apply the results for quasi-log schemes to 
normal pairs. 

\begin{ex}\label{q-ex5.3}
Let $X$ be a normal variety and let $B$ be an effective 
$\mathbb R$-divisor on $X$ such that 
$K_X+B$ is $\mathbb R$-Cartier. 
Let $f\colon Y\to X$ be a proper birational morphism 
from a smooth variety $Y$ with $K_Y+B_Y=f^*(K_X+B)$ 
such that $\Supp B_Y$ is a simple normal crossing divisor on $Y$. 
Then $$
\left(X, K_X+B, f\colon (Y, B_Y)\to X\right)
$$ 
naturally becomes a quasi-log scheme. 
Note that $W$ is a qlc center of $[X, K_X+B]$ if and 
only if $W$ is a log canonical center of $(X, B)$. 
By construction, $\Nqlc(X, K_X+B)=\Nlc(X, B)$. 
Hence $(X, B)$ is log canonical if and only if 
$\Nqlc(X, K_X+B)=\emptyset$. 
\end{ex}

A pair $(X, B)$ which is Zariski locally isomorphic to 
a globally embedded simple normal crossing pair at any point is 
called a {\em{simple normal crossing pair}}. 
Let $(X, B)$ be a simple normal crossing pair and let 
$\nu\colon X^\nu \to X$ be the normalization. 
We define $B^\nu$ by $K_{X^\nu}+B^\nu=\nu^*(K_X+B)$, that is, 
$B^\nu$ is the sum of the inverse images of $B$ and the singular locus of $X$. 
We note that $X^\nu$ is a disjoint union of smooth varieties and 
$\Supp B^\nu$ is a simple normal crossing divisor on $X^\nu$. 
Then we say that $W$ is a {\em{stratum}} of $(X, B)$ 
if and only if $W$ is an irreducible component 
of $X$ or the $\nu$-image of some log canonical 
center of $(X^\nu, B^\nu)$. 
Let $(X, B)$ be a simple normal crossing pair as above and let 
$X=\bigcup_{i\in I} X_i$ be the irreducible decomposition of $X$. 
Then a {\em{stratum}} of $X$ means an irreducible 
component of $X_{i_1}\cap \cdots \cap X_{i_k}$ for 
some $\{i_1, \ldots, i_k\}\subset I$. 
It is not difficult to see that $W$ is a stratum of $X$ if and 
only if $W$ is a stratum of $(X, 0)$. 
By definition, 
it is obvious that a globally embedded simple 
normal crossing pair is a simple normal crossing 
pair. 

\medskip 

Let us recall the definition of {\em{basic slc-trivial fibrations}}. 

\begin{defn}[Basic slc-trivial fibrations]\label{q-def5.4}
A {\em{basic $\mathbb Q$-slc-trivial 
{\em{(}}resp.~$\mathbb R$-slc-trivial{\em{)}} fibration}} 
$f \colon (X, B)\to Y$ consists of 
a projective surjective morphism 
$f \colon X\to Y$ and a simple normal crossing pair $(X, B)$ satisfying 
the following properties: 
\begin{itemize}
\item[(1)] $Y$ is a normal variety,   
\item[(2)] every stratum of $X$ is dominant onto $Y$ and 
$f_*\mathcal O_X\simeq \mathcal O_Y$, 
\item[(3)] $B$ is a $\mathbb Q$-divisor 
(resp.~an $\mathbb R$-divisor) 
such that $B=B^{\leq 1}$ holds 
over 
the generic point of $Y$, 
\item[(4)] there exists 
a $\mathbb Q$-Cartier $\mathbb Q$-divisor 
(resp.~an $\mathbb R$-Cartier $\mathbb R$-divisor) 
$D$ on $Y$ such that 
$
K_X+B\sim _{\mathbb Q}f^*D 
$ (resp.~$K_X+B\sim _{\mathbb R} f^*D$), 
and   
\item[(5)] $\rank f_*\mathcal O_X(\lceil -(B^{<1})\rceil)=1$.  
\end{itemize}
\end{defn}

If there is no danger of confusion, we simply use 
basic slc-trivial fibrations to
denote basic $\mathbb Q$-slc-trivial fibrations or 
basic $\mathbb R$-slc-trivial fibrations. 
Let $f \colon (X, B)\to Y$ be a basic 
slc-trivial fibration as in Definition \ref{q-def5.4} 
and let $\nu \colon X^\nu\to X$ be the normalization with 
$K_{X^\nu}+B^\nu=\nu^*(K_X+B)$ as before. 
Let $P$ be a prime divisor on $Y$. 
By shrinking $Y$ around the generic point of $P$, 
we assume that $P$ is a Cartier divisor. We set 
$$
b_P=\max \left\{t \in \mathbb R\, \left|\, 
\begin{array}{l}  {\text{$(X^\nu, B^\nu+t\nu^*f^*P)$ is sub log canonical}}\\
{\text{over the generic point of $P$}} 
\end{array}\right. \right\}  
$$ and 
put $$
B_Y=\sum _P (1-b_P)P, 
$$ 
where $P$ runs over prime divisors on $Y$. 
Then $B_Y$ is a well-defined $\mathbb R$-divisor on 
$Y$ and is called the {\em{discriminant 
$\mathbb R$-divisor}} of $f \colon (X, B)\to Y$. We set 
$$
M_Y=D-K_Y-B_Y
$$ 
and call $M_Y$ the {\em{moduli $\mathbb R$-divisor}} of $f \colon 
(X, B)\to Y$. 
The discriminant $\mathbb R$-divisor $B_Y$ is uniquely 
determined by $f\colon X\to Y$ and $B$ geometrically. 
On the other hand, the moduli $\mathbb R$-divisor 
$M_Y$ depends on the choice of $K_X$, $K_Y$, and $D$. 
By definition, we have 
$$
K_X+B\sim _{\mathbb R}f^*(K_Y+B_Y+M_Y). 
$$
Let $\sigma\colon Y'\to Y$ be a proper birational morphism 
from a normal variety $Y'$. 
Then we have the following commutative diagram: 
$$
\xymatrix{
   (X^\sharp, B^\sharp) \ar[r]^{\mu} \ar[d] 
   & (X^\nu, B^\nu)\ar[d]^{f\circ \nu} \\
   Y' \ar[r]_{\sigma} & Y, 
} 
$$ 
where $X^\sharp$ denotes the normalization of the main 
components of $X\times _Y Y'$ 
and $B^\sharp$ is defined by $K_{X^\sharp}+B^\sharp=
\mu^*(K_{X^\nu}+B^\nu)$. 
As above, we can define $\mathbb R$-divisors 
$B_{Y'}$, $K_{Y'}$ and $M_{Y'}$ for $(X^\sharp, B^\sharp)\to Y'$ 
such that 
$\sigma^*D=K_{Y'}+B_{Y'}+M_{Y'}$, 
$\sigma_*B_{Y'}=B_Y$, $\sigma _*K_{Y'}=K_Y$ 
and $\sigma_*M_{Y'}=M_Y$ hold. 
Hence there exist a unique $\mathbb R$-b-divisor $\mathbf B$ 
such that 
$\mathbf B_{Y'}=B_{Y'}$ for every 
$\sigma \colon Y'\to Y$ and a unique 
$\mathbb R$-b-divisor $\mathbf M$ 
such that $\mathbf M_{Y'}=M_{Y'}$ for 
every $\sigma \colon Y'\to Y$. 
We call $\mathbf B$ the 
{\em{discriminant $\mathbb R$-b-divisor}} associated to 
$f \colon (X, B)\to Y$. 
The $\mathbb R$-b-divisor $\mathbf M$ is called 
the {\em{moduli $\mathbb R$-b-divisor}} associated 
to $f \colon (X, B)\to Y$. 
When $f\colon (X, B)\to Y$ is a basic $\mathbb Q$-slc-trivial fibration, 
it is easy to see that $\mathbf B$ and $\mathbf M$ 
are $\mathbb Q$-b-divisors by construction. 

By using the theory of mixed Hodge structures on 
cohomology with compact support (see \cite[Chapter 5]{fujino23}), we have: 

\begin{thm}[{\cite[Theorem 6.3.5]{fujino23}}]\label{q-thm5.5}
Let $[X, \omega]$ be a quasi-log scheme 
and let $X'$ be the union of $X_{-\infty}$ with 
a {\em{(}}possibly empty{\em{)}} union of some qlc strata of $[X, \omega]$. 
Then we have the following properties. 
\begin{itemize}
\item[(i)] {\em{(Adjunction for quasi-log schemes).}}~Assume that 
$X'\ne X_{-\infty}$. Then $X'$ naturally becomes 
a quasi-log scheme with 
$\omega'=\omega|_{X'}$ and $X'_{-\infty}=X_{-\infty}$. 
Moreover, the qlc strata of $[X', \omega']$ 
are exactly the qlc strata of $[X, \omega]$ that are included in $X'$. 
\item[(ii)] {\em{(Vanishing theorem for 
quasi-log schemes).}}~Assume that 
$\pi\colon X\to S$ is a proper morphism 
between schemes. Let $L$ be a Cartier divisor on $X$ such that 
$L-\omega$ is nef and log big over $S$ with 
respect to $[X, \omega]$. 
Then $R^i\pi_*(\mathcal I_{X'}\otimes \mathcal O_X(L))=0$ for 
every $i>0$, 
where $\mathcal I_{X'}$ is the defining ideal sheaf of $X'$ on $X$. 
\end{itemize}
\end{thm}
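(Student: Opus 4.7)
The plan is to treat (ii) first and then deduce (i) from it; both parts ultimately reduce to the fundamental vanishing theorem for globally embedded simple normal crossing pairs, which is a consequence of the theory of mixed Hodge structures on cohomology with compact support (see \cite[Chapter 5]{fujino23}).

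For (ii), I would pass from $X$ to the quasi-log datum $f\colon (Y, B_Y)\to X$. Because $f^*\omega \sim_{\mathbb R} K_Y+B_Y$ and every qlc stratum of $[X, \omega]$ is by definition the image of a stratum of $(Y, B_Y)$, the hypothesis that $L-\omega$ is nef and log big over $S$ with respect to $[X, \omega]$ translates to saying that $f^*L-(K_Y+B_Y)$ is $(\pi\circ f)$-nef and, restricted to each stratum of $(Y, B_Y)$ not lying over $X_{-\infty}$, is $(\pi\circ f)$-big. Let $Y'\subset Y$ denote the union of those strata of $(Y, B_Y)$ whose image in $X$ is contained in $X'$, together with $\lfloor B_Y^{>1}\rfloor$; this is a simple normal crossing subscheme of $Y$. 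I would then write down a short exact sequence on $Y$ of the form
\begin{equation*}
0\to \mathcal O_Y\bigl(\lceil -(B_Y^{<1})\rceil-\lfloor B_Y^{>1}\rfloor-Y'\bigr)\to \mathcal O_Y\bigl(\lceil -(B_Y^{<1})\rceil-\lfloor B_Y^{>1}\rfloor\bigr)\to \mathcal Q \to 0,
\end{equation*}
where $\mathcal Q$ is supported on $Y'$, tensor with $f^*\mathcal O_X(L)$, and apply the fundamental vanishing theorem on both $Y$ and (each stratum of) $Y'$ to kill the higher direct images of the outer terms. A diagram chase then identifies $R^i\pi_*(\mathcal I_{X'}\otimes \mathcal O_X(L))$ with $0$ for every $i>0$.

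For (i), I would use the same $Y'$ to endow $X'$ with a quasi-log structure: take $\omega'=\omega|_{X'}$, $X'_{-\infty}=X_{-\infty}$, and let $B_{Y'}$ be obtained from $B_Y$ by adjunction on the ambient smooth variety of $(Y, B_Y)$. The proper morphism $f|_{Y'}\colon (Y', B_{Y'})\to X'$ is the candidate datum. Property (1) of Definition \ref{q-def5.1} is an immediate adjunction computation, and property (3), that the qlc strata of $[X', \omega']$ are precisely the qlc strata of $[X, \omega]$ contained in $X'$, is clear from the construction of $Y'$. The crux is property (2), the isomorphism
\begin{equation*}
\mathcal I_{X'_{-\infty}/X'}\xrightarrow{\ \simeq\ } (f|_{Y'})_*\mathcal O_{Y'}\bigl(\lceil -(B_{Y'}^{<1})\rceil-\lfloor B_{Y'}^{>1}\rfloor\bigr);
\end{equation*}
one obtains it by pushing forward the very same short exact sequence used in (ii), but now applied relatively over $S=X$, so that the required vanishing of the connecting $R^1f_*$-term is exactly the content of (ii) in this local setting (with $L=0$).

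The main obstacle I expect is the combinatorial and SNC-adjunction bookkeeping needed to define $Y'$ and $B_{Y'}$ precisely so that the decomposition $B_{Y'}=B_{Y'}^{<1}+B_{Y'}^{=1}+B_{Y'}^{>1}$ matches $X'_{-\infty}=X_{-\infty}$ correctly and makes the isomorphism above fall out cleanly from the short exact sequence. A secondary subtlety is that the fundamental vanishing theorem must be invoked on the (possibly reducible) simple normal crossing scheme $Y'$, not only on the smooth $Y$ — this is precisely the strength of the mixed Hodge-theoretic vanishing developed in \cite[Chapter 5]{fujino23}, without which the argument collapses.
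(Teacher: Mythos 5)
The paper itself does not prove this statement: it is quoted verbatim from \cite[Theorem 6.3.5]{fujino23}, so the comparison has to be with the proof given there. Your overall shape (split the simple normal crossing pair, write a short exact sequence, push forward, and invoke the Hodge-theoretic package of \cite[Chapter 5]{fujino23}) is indeed the strategy of that proof, but as written there are three genuine gaps. First, your short exact sequence only makes sense if $Y'$ is a union of irreducible components of $Y$ (so that $\mathcal O_Y(\lceil -(B_Y^{<1})\rceil-\lfloor B_Y^{>1}\rfloor-Y')$ is an honest divisorial twist); in general the strata of $(Y,B_Y)$ mapping into $X'$ have arbitrary codimension in $Y$. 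The proof in \cite{fujino23} first replaces the quasi-log resolution by a better one, blowing up the ambient space $M$ (this is the role of \cite[Propositions 6.3.1, 6.3.2, and 6.3.3]{fujino23}), so that the union of all strata mapping into $X'$ becomes a union of irreducible components $Y'$ of $Y$, and then uses
$0\to \mathcal O_{Y''}(A''-Y'|_{Y''})\to \mathcal O_Y(A)\to \mathcal O_{Y'}(A')\to 0$ with $Y=Y'\cup Y''$. Omitting this reduction is not bookkeeping; without it the sequence you write does not exist. Second, your translation of the positivity is incorrect: bigness of $(L-\omega)|_W$ over $S$ on each qlc stratum $W$ does \emph{not} give bigness of $f^*L-(K_Y+B_Y)$ on the strata of $Y$, because a stratum of $Y$ can have positive-dimensional fibers over its image in $X$. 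The vanishing used in \cite{fujino23} is the relative vanishing theorem for simple normal crossing pairs in which the positivity is required only of a divisor $H$ on $X$ with $f^*L-(K_Y+B_Y)\sim_{\mathbb R}f^*H$, $H$ nef and log big over $S$ with respect to the images of the strata; a Kawamata--Viehweg-type argument upstairs on $Y$, as your phrasing suggests, is not available.

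Third, and most seriously, your derivation of property (2) of the quasi-log structure on $X'$ from ``(ii) in this local setting (with $L=0$)'' does not work: with $L=0$ there is no nef and log big hypothesis to invoke, so (ii) says nothing. What actually makes the pushforward of the short exact sequence exact is the torsion-freeness (strict support) theorem from \cite[Chapter 5]{fujino23}: every associated prime of $R^1f_*\mathcal O_{Y''}(A''-Y'|_{Y''})$ is the generic point of the $f$-image of a stratum of $(Y'',B_{Y''})$, and after the reduction no stratum of $Y''$ maps into $X'$, so the connecting homomorphism from a sheaf supported on $X'$ must vanish. In other words, adjunction (i) rests on the injectivity/torsion-freeness part of the package and cannot be deduced from the vanishing part (ii); in \cite{fujino23} the two statements are proved in parallel from the same exact sequence, torsion-freeness giving (i) and the nef-and-log-big vanishing giving (ii). If you repair these three points your outline becomes essentially the proof in \cite[Theorem 6.3.5]{fujino23}.
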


Theorem \ref{q-thm5.5} is a key result in the theory of quasi-log 
schemes. Note that Theorem \ref{q-thm5.5} (ii) can be seen 
as 
a Kawamata--Viehweg--Nadel vanishing theorem for 
quasi-log schemes. 

\begin{rem}\label{q-rem5.6} 
If $\Nqklt(X, \omega)\ne \Nqlc(X, \omega)$, then 
$[\Nqklt(X, \omega), \omega|_{\Nqklt(X, \omega)}]$ naturally 
becomes a quasi-log scheme by adjunction (see Theorem 
\ref{q-thm5.5} (i)). 
\end{rem}

By using the theory of variations of mixed Hodge 
structure on cohomology with compact support 
(see \cite{fujino-fujisawa1} and 
\cite{fujino-fujisawa-saito}), we have: 

\begin{thm}[{\cite[Theorem 5.1]{fujino-hashizume2}}]\label{q-thm5.7}
Let $f\colon (X, B)\to Y$ be a 
basic $\mathbb R$-slc-trivial fibration such 
that $Y$ is a smooth quasi-projective variety. 
We write 
$K_X+B\sim _{\mathbb R} f^*D$. 
Assume that 
there exists a simple normal crossing divisor 
$\Sigma$ on $Y$ such that 
$\Supp D\subset \Sigma$ and 
that every stratum of $(X, \Supp B)$ is 
smooth over $Y\setminus \Sigma$. 
Let $\mathbf B$ and $\mathbf M$ be 
the discriminant and moduli $\mathbb R$-b-divisors 
associated to $f\colon (X, B)\to Y$, respectively. 
Then 
\begin{itemize}
\item[(i)] $\mathbf K +\mathbf B=
\overline {\mathbf K_Y+\mathbf B_Y}$ holds, 
where $\mathbf K$ is the canonical b-divisor 
of $Y$, and 
\item[(ii)] $\mathbf M_Y$ is a potentially 
nef $\mathbb R$-divisor 
on $Y$ with $\mathbf M=\overline {\mathbf M_Y}$. 
\end{itemize}
\end{thm}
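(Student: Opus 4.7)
The plan is to treat (i) and (ii) separately: part (i) should reduce to a combinatorial/birational computation enabled by the SNC hypothesis, while part (ii) will depend essentially on the theory of variations of mixed Hodge structure on cohomology with compact support developed in \cite{fujino-fujisawa1} and \cite{fujino-fujisawa-saito}.

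First I would handle (i). Fix an arbitrary proper birational morphism $\sigma\colon Y'\to Y$ from a smooth variety, form the induced basic $\mathbb R$-slc-trivial fibration $(X^\sharp,B^\sharp)\to Y'$ as recalled just before the statement, and verify the equality $\mathbf B_{Y'}=\sigma^*(K_Y+\mathbf B_Y)-K_{Y'}$ by direct computation. The key observation is that $\mathbf B_Y$ is supported on $\Sigma$: over $Y\setminus\Sigma$ every stratum of $(X,\Supp B)$ is smooth over the base, so $b_P=1$ and $\mathbf B$ has no contribution there. Over $\Sigma$, the SNC hypothesis forces every degeneration to occur along $\Sigma$, and for each prime divisor $P'\subset Y'$ lying over $\Sigma$ I would compute the log canonical threshold $b_{P'}$ by a monomial calculation in local coordinates adapted to the SNC structure. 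This calculation matches the discrepancy contribution from $\sigma^*(K_Y+\mathbf B_Y)-K_{Y'}$ termwise, giving the equality. Running this for every such $\sigma$ yields $\mathbf K+\mathbf B=\overline{\mathbf K_Y+\mathbf B_Y}$.

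Next I would address (ii). Here the strategy is to realize $\mathbf M_Y$ as the first Chern class of a canonical extension to $Y$ of a variation of mixed Hodge structure on cohomology with compact support, arising from the family $f^\circ\colon f^{-1}(Y^\circ)\to Y^\circ$ over $Y^\circ:=Y\setminus\Sigma$. Since every stratum of $(X,\Supp B)$ is smooth over $Y^\circ$, the restricted simple normal crossing pair $(f^{-1}(Y^\circ),B|_{f^{-1}(Y^\circ)})$ yields such a VMHS on $Y^\circ$ through \cite{fujino-fujisawa1}; concretely, the relevant object is the appropriate graded piece of $R^{\dim f}f^\circ_*$ of a logarithmic de Rham complex built out of $\omega_{X^\circ/Y^\circ}$ twisted by $-\lfloor B^{<1}|_{X^\circ}\rfloor$. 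The canonical (Deligne-type) extension of this VMHS to $Y$ along the SNC boundary $\Sigma$ is locally free, and the semi-positivity theorem for such Hodge bundles from \cite{fujino-fujisawa-saito} guarantees that the first Chern class of the relevant graded piece is a potentially nef $\mathbb R$-divisor on $Y$; this is the $\mathbf M_Y$ we want. The identification $\mathbf M=\overline{\mathbf M_Y}$ then follows by combining (i) with the defining relation $K_X+B\sim_{\mathbb R} f^*D$ and $D=K_Y+\mathbf B_Y+\mathbf M_Y$.

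The main obstacle will be (ii). Part (i) is essentially birational bookkeeping once the SNC hypothesis is in place. The potential nefness in (ii), on the other hand, is the genuinely deep input: it requires canonical extensions of VMHS on cohomology with compact support, not merely ordinary cohomology, together with the semi-positivity theorems of Fujino--Fujisawa--Saito for those Hodge bundles. This is the substantive content of \cite[Theorem 5.1]{fujino-hashizume2} cited in the statement, and any independent argument would have to reconstruct that machinery.
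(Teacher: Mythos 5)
There is no proof of this statement in the paper for you to be measured against: Theorem \ref{q-thm5.7} is imported verbatim from \cite[Theorem 5.1]{fujino-hashizume2}, and the paper's only ``proof'' is that citation, together with the remark that the result rests on the theory of variations of mixed Hodge structure on cohomology with compact support (\cite{fujino-fujisawa1}, \cite{fujino-fujisawa-saito}). Your sketch is therefore really a reconstruction of the strategy of the cited references, and at that level it is consistent with them: (i) is indeed proved by showing that, under the log smoothness hypothesis over $Y\setminus\Sigma$ with $\Supp D\subset \Sigma$, the discriminant is supported on $\Sigma$ and is stable under pullback to higher models (a log canonical threshold computation in the spirit you describe), and (ii) is the genuinely deep part, coming from semipositivity of canonical extensions of Hodge bundles for VMHS on cohomology with compact support.

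The concrete gap is in your treatment of (ii) for $\mathbb R$-coefficients. The Hodge-theoretic construction you invoke does not apply directly to a basic $\mathbb R$-slc-trivial fibration: to produce the relevant VMHS one needs a $\mathbb Q$-structure, since the construction passes through a finite (cyclic) cover trivializing $b(K_{X/Y}+B)\sim f^*(bD)$ for some positive integer $b$, and the moduli part is then read off from the rank-one piece corresponding to $f_*\mathcal O_X(\lceil -(B^{<1})\rceil)$ inside (a graded piece of the canonical extension of) that VMHS; it is not simply the first Chern class of a graded piece of $R^{\dim f}f^\circ_*$ of a logarithmic de Rham complex attached to the original family. For irrational coefficients this machinery has no direct meaning, and the actual argument in \cite{fujino-hashizume2} (building on \cite{fujino30}) proceeds by reducing the $\mathbb R$-case to the $\mathbb Q$-case: one writes $B$ in terms of finitely many $\mathbb Q$-divisors with the same relevant round-downs and supports, giving basic $\mathbb Q$-slc-trivial fibrations whose discriminants and moduli parts combine linearly to those of $f\colon (X,B)\to Y$, and then applies the $\mathbb Q$-case (where the VMHS and semipositivity theorems are available) to each piece. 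Your proposal omits this reduction entirely, and without it the step ``the canonical extension of this VMHS \dots guarantees that the first Chern class \dots is potentially nef'' cannot even be formulated for the fibration in the statement. With that reduction added (and the identification of $\mathbf M_Y$ via the covering construction made precise), your outline matches the proof in the cited reference.
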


Theorem \ref{q-thm5.7} is the most fundamental property of 
basic slc-trivial 
fibrations. In the proof of Theorem 
\ref{q-thm1.3}, we will use the following result, which 
easily follows from Theorem \ref{q-thm5.7}.  

\begin{cor}[{\cite[Corollary 5.2]{fujino-hashizume2}}]\label{q-cor5.8} 
Let $f\colon (X, B)\to Y$ be a basic 
$\mathbb R$-slc-trivial fibration and 
let $\mathbf B$ and $\mathbf M$ be the 
discriminant and moduli $\mathbb R$-b-divisors associated 
to $f\colon (X, B)\to Y$, respectively. 
Then we have the following properties: 
\begin{itemize}
\item[(i)] $\mathbf K+\mathbf B$ is $\mathbb R$-b-Cartier, 
where $\mathbf K$ is the canonical 
b-divisor of $Y$, and 
\item[(ii)] $\mathbf M$ is b-potentially nef, that is,  
there exists a proper birational morphism $\sigma\colon Y'\to Y$ 
from a normal variety $Y'$ such that 
$\mathbf M_{Y'}$ is a potentially nef $\mathbb R$-divisor on $Y'$ and 
that $\mathbf M=\overline{\mathbf M_{Y'}}$ holds. 
\end{itemize}
\end{cor}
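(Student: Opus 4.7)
The plan is to reduce Corollary \ref{q-cor5.8} to Theorem \ref{q-thm5.7} by passing to a sufficiently good birational model of the base $Y$. Write $K_X+B\sim_{\mathbb R}f^*D$ as in Definition \ref{q-def5.4}.

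First, I would choose a proper birational morphism $\sigma\colon Y'\to Y$ from a smooth quasi-projective variety $Y'$ via Hironaka's resolution of singularities, together with the base-changed data $(X^\sharp,B^\sharp)\to Y'$ constructed from the commutative diagram displayed just before Theorem \ref{q-thm5.7}. By replacing $(X^\sharp,B^\sharp)$ with a suitable log resolution and blowing up $Y'$ further if necessary, I would arrange that $(X^\sharp,B^\sharp)\to Y'$ is itself a basic $\mathbb R$-slc-trivial fibration, that $\sigma^*D$ is supported on a simple normal crossing divisor $\Sigma$ on $Y'$, and that every stratum of $(X^\sharp,\Supp B^\sharp)$ is smooth over $Y'\setminus\Sigma$. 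This is a standard combination of Hironaka's theorem, generic smoothness, and iterated log resolution; the key point is that any codimension-one locus of non-smoothness can be absorbed into a larger SNC divisor $\Sigma$ at the cost of a further blow-up of $Y'$.

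Once this good model is in place, I would apply Theorem \ref{q-thm5.7} to $(X^\sharp,B^\sharp)\to Y'$. The theorem yields
\[
\mathbf K+\mathbf B=\overline{\mathbf K_{Y'}+\mathbf B_{Y'}}\qquad\text{and}\qquad \mathbf M=\overline{\mathbf M_{Y'}}
\]
with $\mathbf M_{Y'}$ a potentially nef $\mathbb R$-divisor on $Y'$. Here I use the compatibility of the discriminant and moduli $\mathbb R$-b-divisors under birational modifications of the base, which is precisely the content of the paragraph immediately preceding Theorem \ref{q-thm5.7}: the discriminant and moduli b-divisors associated with $(X^\sharp,B^\sharp)\to Y'$ agree, as b-divisors on $Y$, with the original $\mathbf B$ and $\mathbf M$ attached to $f\colon(X,B)\to Y$.

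Statements (i) and (ii) of the corollary then follow directly by comparison with Definition \ref{p-def3.6}: (i) is the very definition of $\mathbb R$-b-Cartier applied to the equality $\mathbf K+\mathbf B=\overline{\mathbf K_{Y'}+\mathbf B_{Y'}}$, and (ii) is the definition of b-potentially nef applied to $\mathbf M=\overline{\mathbf M_{Y'}}$. The main technical obstacle is carrying out the preparation step carefully: one must simultaneously achieve smoothness of $Y'$, a simple normal crossing divisor $\Sigma$ containing $\Supp(\sigma^*D)$, and smoothness of every stratum of the total space over $Y'\setminus\Sigma$, while keeping track of how the b-divisors transform. Everything else in the proof is bookkeeping, and no new geometry beyond Theorem \ref{q-thm5.7} is required.
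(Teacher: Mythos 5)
Your proposal is correct and is essentially the route the paper itself indicates: the paper offers no independent proof, citing \cite[Corollary 5.2]{fujino-hashizume2} and remarking that the statement easily follows from Theorem \ref{q-thm5.7}, and your argument --- pass to a smooth quasi-projective model $\sigma\colon Y'\to Y$ on which $\sigma^*D$ is supported in a simple normal crossing divisor $\Sigma$ with all strata of the (log resolved) total space smooth over $Y'\setminus\Sigma$, apply Theorem \ref{q-thm5.7} there, and invoke the birational compatibility of the discriminant and moduli $\mathbb R$-b-divisors described before Theorem \ref{q-thm5.7} --- is exactly that reduction. The only details worth making explicit are that $Y'$ should be obtained by Chow's lemma followed by Hironaka so that it is genuinely quasi-projective, and that the further modification of the total space is crepant, so the induced fibration over $Y'$ has the same $\mathbf B$ and $\mathbf M$.
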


\begin{rem}\label{q-rem5.9}
It is conjectured that $\mathbf M_Y$ is semi-ample in Theorem 
\ref{q-thm5.7}. Unfortunately, however, it is still widely open. 
We note that $\mathbf M_Y$ is known to be semi-ample when 
$Y$ is a curve (see \cite{fujino-fujisawa-liu} and \cite{fujino-hashizume2}). 
We do not mention them here although there are several cases 
in which the semi-ampleness of $\mathbf M_Y$ is known 
when X is irreducible. 
In this paper, we are mainly interested in the case where $X$ is reducible. 
\end{rem}

Very roughly speaking, in the author's opinion, 
the theory of quasi-log schemes is a powerful framework 
to use mixed Hodge structures on cohomology with 
compact support for the study of higher-dimensional 
algebraic varieties and the theory of basic slc-trivial fibrations 
was constructed in order to make the theory of variations of 
mixed Hodge structure on cohomology with compact support 
applicable for some geometric problems.

Finally, we note that Theorems \ref{q-thm5.5}, \ref{q-thm5.7}, 
and Corollary \ref{q-cor5.8} hold 
true over any algebraically closed field $k$ of characteristic zero 
(see Section \ref{q-sec2}). 

\section{On normal irreducible quasi-log schemes}\label{q-sec6}

In this section, we will prove Theorem \ref{q-thm1.3}. 
For the proof of Theorem \ref{q-thm1.3}, 
we prepare an elementary lemma. 

\begin{lem}\label{q-lem6.1}
Let $m$ be any positive integer and let $a$ be 
any nonnegative real number. 
If $t\leq -\frac{a}{m}$, then 
the following inequality 
\begin{equation}\label{q-eq6.1}
m\left(1+\lfloor -t\rfloor\right)\geq 1+\lfloor a\rfloor
\end{equation}
holds. 
\end{lem}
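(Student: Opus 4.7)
The plan is to reduce to a single integrality argument. Set $s = -t$, so the hypothesis $t \leq -a/m$ becomes $s \geq a/m$, and note $s \geq 0$ because $a \geq 0$. In this notation, the goal \eqref{q-eq6.1} reads $m(1 + \lfloor s \rfloor) \geq 1 + \lfloor a \rfloor$.

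Next, I would apply the elementary bound $\lfloor s \rfloor > s - 1$, i.e.\ $1 + \lfloor s \rfloor > s$. Multiplying by the positive integer $m$ and combining with $ms \geq a$ from the hypothesis yields the chain
\[
m\bigl(1 + \lfloor s \rfloor\bigr) \;>\; ms \;\geq\; a \;\geq\; \lfloor a \rfloor.
\]

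The key step is then integrality: the left-hand side $m(1 + \lfloor s \rfloor)$ is an integer (product of the positive integer $m$ with the integer $1 + \lfloor s \rfloor$), and $\lfloor a \rfloor$ is an integer, so the strict inequality $m(1 + \lfloor s \rfloor) > \lfloor a \rfloor$ between integers upgrades to $m(1 + \lfloor s \rfloor) \geq 1 + \lfloor a \rfloor$, which is \eqref{q-eq6.1}.

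There is no real obstacle here; the lemma is essentially a one-line computation, and the only subtlety worth flagging is ensuring that $s \geq 0$ (hence $\lfloor s \rfloor \geq 0$, so the multiplication by $m$ preserves the strict inequality in the expected direction) and that both sides compared at the end are integers so that a strict inequality can be promoted by one unit.
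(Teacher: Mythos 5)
Your proof is correct. It takes a slightly different route from the paper: the paper writes $a = mk + l$ with $k \in \mathbb{Z}_{\geq 0}$ and $0 \leq l < m$, then estimates $\lfloor -t \rfloor \geq \lfloor a/m \rfloor = k$ and $\lfloor a \rfloor \leq mk + m - 1$ and checks the difference of the two sides is nonnegative; you instead use the strict bound $1 + \lfloor s \rfloor > s$ with $s = -t$, the chain $m(1+\lfloor s\rfloor) > ms \geq a \geq \lfloor a \rfloor$, and then promote the strict inequality between the integers $m(1+\lfloor s\rfloor)$ and $\lfloor a\rfloor$ by one unit. Your version avoids the division-with-remainder bookkeeping and is arguably cleaner; the paper's version makes the equality case visible explicitly. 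One small remark: the aside about needing $s \geq 0$ (hence $\lfloor s\rfloor \geq 0$) is superfluous for the multiplication step, since multiplying a strict inequality by the positive integer $m$ preserves it regardless of the signs of the two sides; nothing in your chain actually uses $a \geq 0$ beyond $a \geq \lfloor a\rfloor$, which holds for any real $a$.
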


\begin{proof}
We can uniquely write 
$$
a=mk+l
$$ 
for some nonnegative integer $k$ with $0\leq l<m$. 
Then 
\begin{equation*}
\begin{split}
m(1+\lfloor -t\rfloor)-(1+\lfloor a\rfloor) &\geq 
m\left(1+\left\lfloor \frac{a}{m}\right
\rfloor\right)-(1+\lfloor a\rfloor)\\ 
&\geq m(1+k)-(1+mk+m-1)\\
&=0
\end{split}
\end{equation*} 
holds. 
This implies the desired inequality. 
\end{proof}

Let us prove Theorem \ref{q-thm1.3}. 

\begin{proof}[Proof of Theorem \ref{q-thm1.3}]
We divide the proof into several small steps. 
The arguments in 
Steps \ref{q-step1.3.1} and \ref{q-step1.3.2} 
are standard. 
Step \ref{q-step1.3.3} is new. 
\setcounter{step}{0}
\begin{step}\label{q-step1.3.1}
By definition, $\mathcal I_{X_{-\infty}}=f_*\mathcal O_Y(\lceil 
-(B^{<1}_Y)\rceil -\lfloor B^{>1}_Y\rfloor)$ is an ideal sheaf on $X$ 
and 
$\Supp \lfloor B^{>1}_Y\rfloor$ is not dominant onto $X$ by $f$. 
Therefore, $\rank f_*\mathcal O_Y(\lceil -(B^{<1}_Y)\rceil)=1$ holds. 
In particular, we have $\rank f_*\mathcal O_Y=1$. 
Let $f\colon Y\to Z\to X$ be the Stein factorization of $f\colon Y\to X$. 
Since every irreducible component of $Y$ is dominant 
onto $X$ and $\rank f_*\mathcal O_Y=1$, 
$Z\to X$ is a finite birational morphism from a 
variety $Z$ onto a normal variety 
$X$. 
Then, by Zariski's main theorem, $Z\to X$ is an isomorphism. 
This means that the natural map $\mathcal O_X\to f_*\mathcal O_Y$ is 
an isomorphism. Hence $f\colon (Y, B_Y)\to X$ is a basic 
$\mathbb R$-slc-trivial fibration. 
By Corollary \ref{q-cor5.8}, we can take 
a proper birational morphism $p\colon X'\to X$ 
from a normal variety $X'$ with $\mathbf K+\mathbf B=
\overline{\mathbf K_{X'}+\mathbf B_{X'}}$ and 
$\mathbf M=\overline{\mathbf M_{X'}}$, where 
$\mathbf M_{X'}$ is a potentially nef $\mathbb R$-divisor 
on $X'$. 
By using Hironaka's resolution, 
we may further assume that $X'$ is a smooth 
quasi-projective variety and 
that $\Supp \mathbf B_{X'}$ is a simple 
normal crossing divisor on $X'$. Therefore, 
we obtain a projective birational morphism 
$p\colon X'\to X$ from a smooth 
quasi-projective variety $X'$ satisfying 
(i), (ii), and (iii). 
By the proof of \cite[Lemma 11.2]{fujino30}, 
we see that $\mathbf B_X$ is effective. 
By the argument in Step 3 in the proof of 
\cite[Theorem 7.1]{fujino35}, we can make $p\colon X'\to X$ satisfy 
(iv). We note that we can directly apply 
the argument in Step 3 in the proof of 
\cite[Theorem 7.1]{fujino35} to the basic 
$\mathbb R$-slc-trivial fibration $f\colon (Y, B_Y)\to X$ by 
Corollary \ref{q-cor5.8}. 
Moreover, by the same argument, we can check 
(v). 
\end{step}
\begin{step}\label{q-step1.3.2} 
In this step, we will check that $\mathcal J_{\Ngklt}$ and 
$\mathcal J_{\Nglc}$ are well-defined 
ideal sheaves on $X$, that is, they are independent 
of $p\colon X'\to X$. 

Let $q\colon X''\to X'$ be a projective birational morphism 
from a smooth quasi-projective variety $X''$ such that 
$\mathbf K_{X''}+\mathbf B_{X''}=q^*(\mathbf K_{X'}+\mathbf B_{X'})$ and 
that $\Supp \mathbf B_{X''}$ is a simple 
normal crossing divisor on $X''$. 
Since $(X', \{\mathbf B_{X'}\})$ is kawamata log terminal, 
$q^*\lfloor \mathbf B_{X'}\rfloor -\lfloor \mathbf B_{X''}\rfloor$ is an 
effective $q$-exceptional divisor on $X''$. 
Hence we have $$q_*\mathcal O_{X''}(-\lfloor \mathbf B_{X''}\rfloor)
=\mathcal O_{X'}(-\lfloor \mathbf B_{X'}\rfloor)$$ by projection formula. 
By this fact and Hironaka's resolution of singularities, 
we can easily see that $\mathcal J_{\Ngklt}$ is independent of 
$p\colon X'\to X$. Since $\mathbf B_X$ is effective by construction 
(see the proof of \cite[Lemma 11.2]{fujino30}), 
$\Supp \mathbf B^{<0}_{X'}$ is $p$-exceptional. 
Hence $\mathcal J_{\Ngklt}$ is an ideal sheaf on $X$ 
and $\mathcal J_{\Ngklt}=p_*\mathcal O_{X'}
\left(-\lfloor \mathbf B^{\geq 1}_{X'}\rfloor\right)$ holds. 
Of course, $\mathcal J_{\Ngklt}$ is a generalization of 
well-known multiplier 
ideal sheaves (see \cite{lazarsfeld}). 
Similarly, since $(X', \{\mathbf B_{X'}\}+\mathbf B^{=1}_{X'})$ is 
divisorial 
log terminal, $q^*\left(\lfloor \mathbf B_{X'}\rfloor -\mathbf B^{=1}_{X'}\right)-
\left(\lfloor \mathbf B_{X''}\rfloor-\mathbf B^{=1}_{X''}\right)$ is 
an effective $q$-exceptional divisor on $X''$. 
Hence $$q_*\mathcal O_{X''}\left(-\lfloor 
\mathbf B_{X''}\rfloor +\mathbf B^{=1}_{X''}\right)=
\mathcal O_{X'}\left(-\lfloor 
\mathbf B_{X'}\rfloor +\mathbf B^{=1}_{X'}\right). 
$$ 
This means that 
$\mathcal J_{\Nglc}$ is independent of $p\colon X'\to X$ by Hironaka's 
resolution of singularities. Since $\lceil -(\mathbf B^{<1}_{X'})\rceil$ is 
effective and $p$-exceptional, 
$\mathcal J_{\Nglc}$ is an ideal sheaf on $X$ and 
$\mathcal J_{\Nglc}=
p_*\mathcal O_{X'}\left(-\lfloor \mathbf B^{>1}_{X'}\rfloor\right)$ 
holds. By definition, $\mathcal J_{\Nglc}$ is a generalization of 
non-lc ideal sheaves. 
\end{step}
\begin{step}\label{q-step1.3.3} 
In this step, we will check the inclusions $\mathcal J_{\Ngklt}\subset 
\mathcal I_{\Nqklt(X, \omega)}$ and $\mathcal J_{\Nglc}\subset 
\mathcal I_{\Nqlc(X, \omega)}$. 

We note that $\mathcal I_{\Nqklt(X, \omega)}=
f_*\mathcal O_Y(-\lfloor B_Y\rfloor)$ holds 
(see \cite[Propositions 6.3.1 and 
6.3.2]{fujino23} and the proof of \cite[Theorem 6.3.5 (i)]{fujino23}). 
Let $P$ be an irreducible component of $\Supp B^{\geq 1}_Y$. 
We take a suitable birational modification $p\colon X'\to X$ 
satisfying (i)--(v) and consider 
the induced basic $\mathbb R$-slc-trivial fibration $f'\colon (Y', B_{Y'})
\to X'$ (see \cite[Definition 3.4]{fujino-hashizume2}). 
We have the following commutative diagram. 
$$
\xymatrix{
   (Y', B_{Y'}) \ar[r]^-q \ar[d]_-{f'} & (Y, B_Y)\ar[d]^-f \\
   X' \ar[r]_-p & X
} 
$$ 
Note that $f'\colon (Y', B_{Y'})\to X'$ is a 
basic $\mathbb R$-slc-trivial fibration with $K_{Y'}+B_{Y'}=
q^*(K_Y+B_Y)$ such that $f'\colon (Y', B_{Y'})\to X'$ coincides with 
the base change of $f\colon (Y, B_Y)\to X$ by $p\colon X'\to X$ over 
some nonempty Zariski open subset of $X'$. 
Without loss of generality, by using the flattening theorem 
(see \cite[Th\'eor\`eme (5.2.2)]{raynaud-g}), 
we may assume that the image of $P':=q^{-1}_*P$ 
by $f'$ is a prime divisor $Q$ on $X'$. 
We put $\coeff _PB_Y=1+a$ with $a\geq 0$ and $\coeff _{P'}f'^*Q=m>0$. 
Then $\coeff _Q\mathbf B_{X'}=1-t$ with $t\leq -\frac{a}{m}$. 
By Lemma \ref{q-lem6.1}, 
$$
m\left(1+\lfloor -t\rfloor\right)\geq 1+\lfloor a\rfloor. 
$$ 
This means that if $$h\in \Gamma (U, \mathcal J_{\Ngklt})=\Gamma \left(U, 
p_*\mathcal O_{X'}\left(-\lfloor 
\mathbf B^{\geq 1}_{X'}\rfloor\right)\right)$$ then 
$$f^*h\in \Gamma \left(f^{-1}(U), 
\mathcal O_Y\left(-\lfloor B^{\geq 1}_Y\rfloor\right)\right)$$ 
for any Zariski open subset $U$ of $X$. 
Therefore, we obtain the desired inclusion 
$$\mathcal J_{\Ngklt}\subset f_*\mathcal O_Y(-\lfloor B_Y\rfloor)=
\mathcal I_{\Nqklt(X, \omega)}. 
$$ 
This is what we wanted. 
The same argument as above works for $\mathcal J_{\Nglc}$ and 
$\mathcal I_{\Nqlc(X, \omega)}=f_*\mathcal O_Y(-\lfloor B_Y
\rfloor+B^{=1}_Y)$. 
Hence we obtain the desired inclusion 
$\mathcal J_{\Nglc}\subset \mathcal I_{\Nqlc(X, \omega)}$. 
\end{step} 
We finish the proof of Theorem \ref{q-thm1.3}. 
\end{proof}

\section{Proof of Theorem \ref{q-thm1.1}}\label{q-sec7}

In this section, we will prove Theorem \ref{q-thm1.1} and 
Corollary \ref{q-cor1.2}. 
Our proof of Theorem \ref{q-thm1.1} here, 
which is a combination of Kawamata's X-method 
with Theorem \ref{q-thm1.3} 
in the framework of quasi-log schemes, is completely 
different from the proof given in \cite{fujino20}. 
A key idea of the proof of Theorem \ref{q-thm1.1} below is due to 
the argument in 
\cite{fujino-liu-haidong2}. 
Let us prove Theorem \ref{q-thm1.1}. 

\begin{proof}[Proof of Theorem \ref{q-thm1.1}]
We divide the proof into several small steps. 
\setcounter{step}{0}
\begin{step}\label{q-step1.1.1}
If $\dim X\setminus X_{-\infty}=0$, then Theorem \ref{q-thm1.1} 
obviously holds true. 
From now on, we assume that Theorem \ref{q-thm1.1} holds for 
any quasi-log scheme $Z$ with 
$\dim Z\setminus Z_{-\infty}<
\dim X\setminus X_{-\infty}$ by induction on $\dim X\setminus X_{-\infty}$. 
\end{step}
\begin{step}\label{q-step1.1.2}
We take a qlc stratum $W$ of $[X, \omega]$. 
We put $X'=W\cup X_{-\infty}$. 
Then, by adjunction (see Theorem \ref{q-thm5.5} (i)), 
$X'$ has a natural quasi-log scheme 
structure induced by $[X, \omega]$. 
By the vanishing theorem (see Theorem \ref{q-thm5.5} (ii)), 
we have 
$$R^1\pi_*
(\mathcal I_{X'}\otimes \mathcal O_X(mL))=0$$ for 
every $m\geq q$, where $\mathcal I_{X'}$ is the defining 
ideal sheaf of $X'$ on $X$. 
Therefore, we obtain that the restriction map  
$$\pi_*\mathcal O_X(mL)\to \pi_*\mathcal O_{X'}
(mL)$$ is surjective for 
every $m\geq q$. 
Thus, we may assume that $\overline {X\setminus X_{-\infty}}$ 
is irreducible for the proof of 
Theorem \ref{q-thm1.1} 
by the following commutative 
diagram. 
$$
\xymatrix{
\pi^*\pi_*\mathcal O_X(mL)\ar[r]\ar[d]
&\pi^*\pi_*\mathcal O_{X'}(mL)\ar[r]\ar[d]&0\\
\mathcal O_X(mL)\ar[r]&\mathcal O_{X'}(mL)\ar[r]&0}
$$
\end{step} 
\begin{step}\label{q-step1.1.3}
We put $W=\overline {X\setminus X_{-\infty}}$. 
By Step \ref{q-step1.1.2}, $W$ is irreducible. 
In this step, we further assume that $W\cap \Nqklt(X, \omega)=\emptyset$. 
In this case, $W$ is a normal variety 
(see \cite[Lemma 6.3.9]{fujino23}) 
and $[W, \omega|_W]$ 
is a quasi-log canonical pair (see \cite[Lemma 6.3.12]{fujino23}). 
By \cite[Theorem 1.7]{fujino30} 
(see also Theorem \ref{q-thm1.3}), 
there exists a projective 
birational morphism $p\colon W'\to W$ from a 
smooth quasi-projective variety $W'$ such that 
$$
p^*\omega|_W=K_{W'}+B_{W'}+M_{W'}, 
$$ 
$\Supp B_{W'}$ is a simple normal crossing divisor, 
$B_{W'}=B^{<1}_{W'}$, $\Supp B^{<0}_{W'}$ is $p$-exceptional, 
and $M_{W'}$ is a potentially nef $\mathbb R$-divisor on $W'$. Hence 
$q(p^*L|_W)-(K_{W'}+B_{W'})$ is nef and big over $S$ and 
$\lceil -B_{W'}\rceil$ is effective and $p$-exceptional. 
Then, by Theorem \ref{p-thm4.2}, 
$\mathcal O_{W'}(mp^*L|_W)$ is $\pi\circ p$-generated for 
every $m\gg 0$. 
Hence $\mathcal O_W(mL|_W)$ is $\pi$-generated for every $m\gg 0$. 
Since $W\cap X_{-\infty}=\emptyset$ by assumption, 
$\mathcal O_X(mL)$ is $\pi$-generated 
for every $m\gg 0$. 
Therefore, from now on, we may assume that 
$W\cap \Nqklt(X, \omega)\ne \emptyset$. 
\end{step}
\begin{step}\label{q-step1.1.4} 
By \cite[Lemma 4.19]{fujino35}, 
$[W, \omega|_W]$ has a natural quasi-log scheme structure 
induced by $[X, \omega]$ such that 
$\mathcal I_{\Nqklt(W, \omega|_W)}=\mathcal I_{\Nqklt(X, \omega)}$ 
holds. 
Let $\nu\colon Z\to W$ be the normalization. 
Then, by \cite[Theorem 1.9]{fujino35} (see also \cite{fujino-liu-haidong1}), 
there exists a proper surjective morphism 
$f'\colon Y'\to Z$ from a quasi-projective globally embedded simple 
normal crossing pair $(Y', B_{Y'})$ such that every stratum of $Y'$ is 
dominant onto $Z$ and that 
$$
\left(Z, \nu^*(\omega|_W), f'\colon (Y', B_{Y'})\to Z\right)
$$ 
naturally becomes a quasi-log scheme with 
$$
\nu_*\mathcal I_{\Nqklt(Z, \nu^*(\omega|_W))}=
\mathcal I_{\Nqklt(W, \omega|_W)}=\mathcal I_{\Nqklt(X, \omega)}. 
$$ 
By Theorem \ref{q-thm1.3}, we can take a projective 
birational morphism 
$p\colon Z'\to Z$ from a smooth quasi-projective variety $Z'$ 
with $$\mathbf K_{Z'}+\mathbf B_{Z'}
+\mathbf M_{Z'}=p^*\nu^*(\omega|_W)$$  satisfying 
(i)--(v) in Theorem \ref{q-thm1.3} such that the following inclusion 
$$
\mathcal J_{\Ngklt}=p_*\mathcal O_{Z'}(-\lfloor \mathbf B_{Z'}\rfloor)
\subset \mathcal I_{\Nqklt(Z, \nu^*(\omega|_W))}
$$ 
holds. 
\end{step}
\begin{step}\label{q-step1.1.5}
By induction on $\dim W$ (see Step \ref{q-step1.1.1}) 
or the assumption that $\mathcal O_{X_{-\infty}}(mL)$ 
is $\pi$-generated for every $m\gg 0$, 
$\mathcal O_{\Nqklt(X, \omega)}
(mL|_{\Nqklt(X, \omega)})$ is $\pi$-generated for every $m\gg 0$. 
As in Step \ref{q-step1.1.2}, 
the restriction map 
$$
\pi_*\mathcal O_X(mL)\to \pi_*\mathcal O_{\Nqklt(X, \omega)}
(mL|_{\Nqklt(X, \omega)})
$$ is surjective for $m\geq q$ since 
$R^1\pi_*(\mathcal I_{\Nqklt(X, \omega)}
\otimes \mathcal O_X(mL))=0$ 
for $m\geq q$ by the vanishing theorem 
(see Theorem \ref{q-thm5.5} (ii)). 
Therefore, the relative base locus of $\mathcal O_X(mL)$ is disjoint from 
$\Nqklt(X, \omega)$ for every $m\gg 0$. 
\end{step}
\begin{step}\label{q-step1.1.6} 
We take a finite affine open covering $S=\bigcup _{\lambda
\in \Lambda} U_\lambda$ of $S$. It is sufficient to 
prove this theorem on each affine open subset $U_\lambda$. 
Hence, by replacing $S$ with $U_\lambda$, 
we may further assume that $S$ is affine. 
Let $\ell$ be a sufficiently large prime number. 
Then $\Bs|\ell L| \cap \Nqklt(Z, \nu^*(\omega|_W))=
\Bs|\ell L|\cap p(\mathbf B^{\geq 1}_{Z'})=\emptyset$ by Step 
\ref{q-step1.1.5}. 
We note that $\mathbf B^{<0}_{Z'}$ is $p$-exceptional 
and that $\mathbf M_{Z'}$ is nef over $S$. 
By Lemma \ref{p-lem4.4}, 
$$
H^0\left(Z, \mathcal O_Z(\ell^s\nu^*(L|_W))\otimes 
p_*\mathcal O_{Z'}(-\lfloor \mathbf B_{Z'}\rfloor))\otimes 
\mathcal O_Z\to \mathcal O_Z(\ell ^s \nu^*(L|_W)\right)
$$ is surjective on $Z\setminus p(\mathbf B^{\geq 1}_{Z'})$ 
for some positive integer $s$. 
We note that 
\begin{equation*}
\begin{split}
&H^0\left(Z, \mathcal O_Z(m\nu^*(L|_W))
\otimes p_*\mathcal O_{Z'}(-\lfloor 
\mathbf B_{Z'}\rfloor)\right)\\ 
&\subset 
H^0\left(Z, \mathcal O_Z(m\nu^*(L|_W))\otimes \mathcal I_{\Nqklt(Z,  
\nu^*(\omega|_W))}\right)\\
&=
H^0\left(W, \mathcal O_W(mL|_W)\otimes 
\mathcal I_{\Nqklt(W,  \omega|_W)}\right)\\
&=
H^0\left(X, \mathcal O_X(mL)\otimes \mathcal I_{\Nqklt(X,  \omega)}\right)
\end{split}
\end{equation*}
holds for every integer $m$. 
Therefore, $\Bs|\ell ^s L|\cap (X\setminus \Nqklt(X, \omega))=\emptyset$. 
This implies that $\Bs|\ell ^s L|=\emptyset$. 
We take a sufficiently large prime number $\ell '$ with $\ell '\ne \ell$. 
By the same argument as above, we can find a positive integer $s'$ such that 
$\Bs |\ell'^{s'}L|=\emptyset$. 
Without loss of generality, we may assume that 
$\ell^s<\ell'^{s'}$ holds. 
Note that $\gcd (\ell^s, \ell'^{s'})=1$ since 
$\ell \ne \ell'$. 
We put $m_0=\ell^s \left(\ell'^{s'} 
-\left\lceil \frac{\ell'^{s'}}{\ell^s}\right\rceil\right)$. 
By Lemma \ref{p-lem4.3}, for every positive integer $m$ with 
$m\geq m_0$, there exist nonnegative integers $u$ and $v$ such that 
$m=u\ell^s+v\ell'^{s'}$. 
This means that 
$\Bs |mL|=\emptyset$ for every $m\geq m_0$ since 
$\Bs|\ell^sL|=\Bs|\ell'^{s'}L|=\emptyset$. 
\end{step} 
We finish the proof of Theorem \ref{q-thm1.1}. 
\end{proof}

We close this section with the proof of Corollary \ref{q-cor1.2}. 

\begin{proof}[Proof of Corollary \ref{q-cor1.2}]
Let $(X, \Delta)$ be a log canonical pair. 
We put $\omega=K_X+\Delta$. 
Then $[X, \omega]$ naturally becomes a quasi-log scheme 
with $\Nqlc(X, \omega)=\emptyset$ (see Example 
\ref{q-ex5.3}). 
By assumption, $qL-\omega$ is nef and log big over 
$S$ with respect to $[X, \omega]$. 
Hence $\mathcal O_X(mL)$ is $\pi$-generated 
for every $m \gg 0$ by Theorem \ref{q-thm1.1}. 
\end{proof}

\section{Comments on Ambro's paper:~Quasi-log varieties}\label{q-sec8}

In this section, we make many comments on \cite{ambro} 
to help the reader 
understand differences between Ambro's original 
approach in \cite{ambro} and our framework of quasi-log schemes 
mainly discussed in \cite[Chapter 6]{fujino23}. 
Note that \cite{fujino11} is a gentle introduction to the 
theory of quasi-log varieties. We also note that 
a {\em{quasi-log variety}} in \cite{ambro} and \cite{fujino11} 
is called a {\em{quasi-log scheme}} in the author's recent 
papers because it may have non-reduced components. 

\setcounter{subsection}{-1}
\subsection{Introduction}\label{q-subsec8.0}
By \cite[Definition 1]{ambro}, 
a {\em{generalized log variety}} $(X, B)$ is a pair consisting of 
a normal variety $X$ and an effective $\mathbb R$-divisor 
$B$ on $X$ such that $K_X+B$ is $\mathbb R$-Cartier. 
Note that we sometimes call $(X, B)$ a {\em{normal pair}} in 
some literature. 
When $(X, B)$ is log canonical, it is called a {\em{log variety}}. 
The main result of \cite{ambro} is the cone and contraction theorem for 
generalized log varieties (see \cite[Theorem 2]{ambro}). 
In order to establish the cone and contraction theorem, 
Ambro introduced the notion of {\em{quasi-log varieties}}, 
which was motivated 
by Kawamata's X-method. He also said that 
the motivation behind 
\cite{ambro} is Shokurov's idea that log varieties 
and non-kawamata log terminal loci should be treated on an equal 
footing. In \cite{fujino14}, we recovered \cite[Theorem 2]{ambro} without 
using the framework of quasi-log schemes. 
The approach in \cite{fujino14} was influenced by not only 
Kawamata's X-method but also the theory of (algebraic) 
multiplier ideal 
sheaves (see \cite{lazarsfeld}). 
We recommend the reader who is interested only in the 
cone and contraction theorem for generalized log varieties to 
see \cite{fujino14}, which seems to be more accessible than 
\cite{ambro}. 
We note that \cite{fujino7} and \cite{fujino13} may help the 
reader understand \cite{fujino14}. 
The cone and contraction theorem for generalized log varieties 
(see \cite[Theorem 2]{ambro} and \cite[Theorem 1.1]{fujino14}) 
plays a crucial role in the minimal model theory for 
log surfaces (see \cite{fujino15}, \cite{fujino19}, \cite{fujino32}, 
\cite{fujino33}, and \cite{fujino-tanaka}). 

\subsection{Section 1.~Preliminary}\label{q-subsec8.1} 
In Section 1 in \cite{ambro}, some standard definitions are collected. 
In the theory of quasi-log schemes, we usually treat highly singular 
reducible schemes. Moreover, we have to treat $\mathbb R$-Cartier 
divisors and $\mathbb R$-line bundles. 
We note that Kleiman's famous ampleness criterion does not 
necessarily hold true for singular complete non-projective schemes 
(see \cite{fujino5} and \cite{fujino32}). 
On the other hand, the Nakai--Moishezon ampleness criterion 
for $\mathbb R$-line bundles holds for any complete schemes 
(see \cite{fujino-miyamoto2}). It sometimes 
may be very useful when we treat $\mathbb R$-line 
bundles on highly singular schemes. 
Moreover, the Nakai--Moishezon ampleness criterion on 
complete algebraic spaces is crucial for the proof of 
the projectivity of some moduli spaces (see \cite{fujino28}). 

\subsection{Section 2.~Normal crossing pairs} \label{q-subsec8.2} 
Ambro defined {\em{multicrossing singularities}} and 
considered their associated hypercoverings (see 
\cite[Definition 2.1 and Lemma 2.2]{ambro}). 
Moreover, he defined 
{\em{multicrossing divisors}}. Then 
he finally introduced the notion of ({\em{embedded}}) 
{\em{normal crossing pairs}} 
(see \cite[Definitions 2.3 and 2.7]{ambro}). 
He used embedded normal crossing pairs to define quasi-log 
varieties. On the other hand, our framework of the theory of 
quasi-log schemes in \cite[Chapter 6]{fujino23} 
uses the notion of 
{\em{globally embedded simple normal crossing pairs}}. 
Note that 
\cite[Propositions 6.3.1, 6.3.2, and 6.3.3]{fujino23} 
is much more flexible than 
\cite[Proposition 2.8]{ambro}. 
We think that our approach is more accessible than Ambro's 
because {\em{globally embedded simple normal crossing pairs}} are much 
easier to treat than 
{\em{embedded normal crossing pairs}}. 
We can use the standard techniques in the 
theory of minimal models for higher-dimensional 
algebraic varieties to treat globally 
embedded simple normal crossing pairs. 
In general, simple normal crossing divisors 
behave much better than normal crossing divisors 
(see \cite{fujino6}). 

\subsection{Section 3.~Vanishing theorems}\label{q-subsec8.3}
Section 3 in \cite{ambro} 
is a short section on injectivity, vanishing, and torsion-free theorems. 
The proof of \cite[Theorems 3.1 and 3.2]{ambro} 
is hard to follow. In \cite[Chapter 5]{fujino23} 
(see also \cite{fujino7}, \cite{fujino14}, \cite{fujino22}, 
\cite{fujino27}, and so on), we 
give a rigorous proof of \cite[Theorems 3.1 and 3.2]{ambro} 
and 
treat some more general results. 
Our approach is slightly different from Ambro's and 
is based on the theory of mixed Hodge structures 
on cohomology with compact support. 
A survey article \cite{fujino26} may help the reader understand 
our approach to vanishing theorems. 
The reader can find some related vanishing theorems in 
\cite{fujino17}, \cite{fujino18}, \cite{fujino29}, and so on. 

\subsection{Section 4.~Quasi-log varieties}\label{q-subsec8.4} 
As we mentioned above, a quasi-log variety is called a quasi-log scheme 
in \cite{fujino23}. 
Section 4 is the main part of \cite{ambro}. 
In Section 4, Ambro defined quasi-log varieties 
in \cite[Definition 4.1]{ambro}. 
Ambro's definition is slightly different from 
ours in \cite[Definition 6.2.2]{fujino23}. 
Note that $(Y, B_Y)$ in \cite[Definition 4.1]{ambro} is an 
embedded normal crossing pair and $(Y, B_Y)$ in 
\cite[Definition 6.2.2]{fujino23} is a globally embedded simple 
normal crossing pair. 
For the details of this difference, see \cite{fujino24}. 
The most important result in this section is \cite[Theorem 4.4]{ambro}, 
which is {\em{adjunction}} and {\em{vanishing}} for quasi-log 
varieties. In \cite[Section 6.3]{fujino23}, we prepare some useful propositions 
(see \cite[Propositions 6.3.1, 6.3.2, and 6.3.3]{fujino23}) and 
prove adjunction and vanishing for quasi-log schemes in \cite[Theorem 6.3.5]
{fujino23}. We also discuss some other basic properties of 
quasi-log schemes in \cite[Sections 6.3 and 6.4]{fujino23}. 
Note that a {\em{qlc center}} in \cite{ambro} is called 
a {\em{qlc stratum}} in \cite[Chapter 
6]{fujino23}. We also note that a {\em{qlc center which is not maximal with respect 
to the inclusion}} in \cite{ambro} is called a {\em{qlc center}} 
in \cite[Chapter 6]{fujino23}. Hence $\mathrm{LCS}(X)$ in 
\cite[Definition 4.6]{ambro} is nothing but $\Nqklt(X, \omega)$ 
in \cite[Chapter 6]{fujino23}. We can recover \cite[Proposition 4.7]{ambro} 
by \cite[Lemma 6.3.9]{fujino23}. Our proof seems to be simpler. 
Note that \cite[Proposition 4.8]{ambro} is \cite[Theorem 6.3.11]{fujino23}. 
In \cite{fujino-hashizume2} (see 
also \cite{fujino-hashizume3}), we completely 
generalize \cite[Theorem 4.9]{ambro}. 
Our treatment depends on the theory of variations of mixed Hodge 
structure on cohomology with compact support 
(see \cite{fujino-fujisawa1}, \cite{fujino-fujisawa-saito}, 
\cite{fujino30}, and \cite{fujino-fujisawa-liu}). 
On the other hand, \cite[Theorem 4.11]{ambro} only 
uses the theory of variations of pure Hodge structure. 

\subsection{Section 5.~The cone theorem}\label{q-subsec8.5}
In \cite[Section 5]{ambro}, the cone and contraction theorem 
for quasi-log schemes was established in full generality. 
The results in \cite[Section 5]{ambro} are recovered in 
\cite[Sections 6.5, 6.6, and 6.7]{fujino23}. 
Although we slightly changed and improved some arguments, 
the treatment in \cite[Sections 6.5, 6.6, and 6.7]{fujino23} 
is essentially the same as that in \cite[Section 5]{ambro}. 
The basepoint-free theorem for quasi-log 
schemes 
(see \cite[Theorem 5.1]{ambro} 
and \cite[Theorem 6.5.1]{fujino23}) 
is generalized in various directions (see \cite{fujino8}, 
\cite{fujino10}, \cite{fujino21}, \cite{fujino25}, \cite{fujino34}, 
\cite{fujino-liu-haidong2}, 
\cite{fujino-miyamoto1}, and \cite{liu-haidong}). 
The theory of quasi-log schemes gives a very powerful 
framework for basepoint-freeness 
(see also the proof of Theorem \ref{q-thm1.1} in 
Section \ref{q-sec7} in this paper). 

\subsection{Section 6.~Quasi-log Fano contractions}\label{q-subsec8.6}
In \cite[Section 6]{ambro}, Ambro 
specialized some results in \cite[Section 5]{ambro} 
for quasi-log Fano contraction morphisms. 
In \cite[Section 6.8]{fujino23}, we treat ({\em{relative}}) 
{\em{quasi-log Fano schemes}}. Moreover, in 
\cite{fujino-liu-wenfei}, \cite{fujino31}, \cite{fujino35}, 
and \cite{fujino-hashizume1}, 
we 
discuss simply connectedness, rationally 
chain connectedness, lengths of rational curves 
for (relative) quasi-log Fano schemes. 
In \cite{fujino31} and \cite{fujino35}, 
we use not only quasi-log schemes but also 
some results obtained by the theory of basic slc-trivial fibrations. 
Moreover, in \cite{fujino-hashizume1}, we also use the minimal 
model program for log canonical pairs. 
Hence the results in \cite{fujino31}, \cite{fujino35}, 
and \cite{fujino-hashizume1} are much more general than 
those in \cite[Section 6]{ambro}. 

\subsection{Section 7.~The log big case}\label{q-subsec8.7}
In Section 7, Ambro replaced the 
ampleness in some theorems with the {\em{nef and log bigness}}. 
Note that \cite[Theorem 7.2]{ambro} is the basepoint-free 
theorem of Reid--Fukuda type for quasi-log schemes, which is 
Theorem \ref{q-thm1.1} in this paper. 
In \cite{ambro}, there is no detail of the proof of \cite[Theorem 7.2]{ambro}. 
Now we have a rigorous proof of \cite[Theorem 7.2]{ambro}. 
The reader can find vanishing theorems for nef and log big 
divisors in \cite[Theorem 5.8.2 and Theorem 6.3.5 (ii)]{fujino23}. 
We note that \cite[Theorem 6.3.8]{fujino23} is a slight generalization of 
\cite[Theorem 7.3]{ambro}. We know that 
everything in \cite[Section 7]{ambro} holds true. 
We note that 
the proof of Theorem \ref{q-thm1.1} in this paper 
depends on some deep results in the theory of 
variations of mixed Hodge structure on cohomology with 
compact support (see \cite{fujino-fujisawa1}, 
\cite{fujino-fujisawa-saito}, \cite{fujino30}, 
\cite{fujino35}, \cite{fujino-hashizume2}, and so on). 



\begin{thebibliography}{KMM} 

\bibitem[A]{ambro}
F.~Ambro, Quasi-log varieties, 
Tr. Mat. Inst. Steklova \textbf{240} (2003), 
Biratsion. Geom. Line\u{\i}n. Sist. Konechno 
Porozhdennye Algebry, 220--239; reprinted 
in Proc. Steklov Inst. Math. 2003, no. 1(240), 214--233 

\bibitem[B]{birkar}
C.~Birkar, 
Generalised pairs in birational geometry, 
EMS Surv. Math. Sci. \textbf{8} (2021), no. 1-2, 5--24.

\bibitem[F1]{fujino1} 
O.~Fujino, Applications of Kawamata's positivity theorem, 
Proc. Japan Acad. Ser. A Math. Sci. \textbf{75} (1999), no. 6, 75--79.

\bibitem[F2]{fujino2} 
O.~Fujino, Base point free theorem of Reid--Fukuda type, 
J. Math. Sci. Univ. Tokyo \textbf{7} (2000), no. 1, 1--5.

\bibitem[F3]{fujino3} 
O.~Fujino, A canonical bundle formula for certain algebraic fiber 
spaces and its applications, 
Nagoya Math. J. \textbf{172} (2003), 129--171.

\bibitem[F4]{fujino4} 
O.~Fujino, Higher direct images of log canonical divisors, 
J. Differential Geom. \textbf{66} (2004), no. 3, 453--479.

\bibitem[F5]{fujino5} 
O.~Fujino, 
On the Kleiman-Mori cone, 
Proc. Japan Acad. Ser. A Math. Sci. \textbf{81} (2005), no. 5, 80--84.

\bibitem[F6]{fujino6} 
O.~Fujino, What is log terminal?, {\em{Flips for $3$-folds and 
$4$-folds}}, 49--62, Oxford Lecture Ser. Math. Appl., \textbf{35}, Oxford 
Univ. Press, Oxford, 2007.

\bibitem[F7]{fujino7} 
O.~Fujino, On injectivity, vanishing and 
torsion-free theorems for algebraic varieties, 
Proc. Japan Acad. Ser. A Math. Sci. \textbf{85} (2009), no. 8, 95--100.

\bibitem[F8]{fujino8} 
O.~Fujino, Effective base point free theorem for log canonical 
pairs---Koll\'ar type theorem, 
Tohoku Math. J. (2) \textbf{61} (2009), no. 4, 475--481.

\bibitem[F9]{fujino9} 
O.~Fujino, Theory of non-lc ideal sheaves: basic properties, 
Kyoto J. Math. \textbf{50} (2010), no. 2, 225--245.

\bibitem[F10]{fujino10} 
O.~Fujino, Effective base point free theorem for log canonical 
pairs, II.~Angehrn--Siu type theorems, 
Michigan Math. J. \textbf{59} (2010), no. 2, 303--312.

\bibitem[F11]{fujino11} 
O.~Fujino, Introduction to the theory of quasi-log varieties, 
{\em{Classification of algebraic varieties}}, 289--303, 
EMS Ser. Congr. Rep., Eur. Math. Soc., Z\"urich, 2011.

\bibitem[F12]{fujino12} 
O.~Fujino, On Kawamata's theorem, 
{\em{Classification of algebraic varieties}}, 305--315, 
EMS Ser. Congr. Rep., Eur. Math. Soc., Z\"urich, 2011.

\bibitem[F13]{fujino13} 
O.~Fujino, Non-vanishing theorem for log canonical pairs, 
J. Algebraic Geom. \textbf{20} (2011), no. 4, 771--783.

\bibitem[F14]{fujino14} 
O.~Fujino, Fundamental theorems for the log minimal model program, 
Publ. Res. Inst. Math. Sci. \textbf{47} (2011), no. 3, 727--789.

\bibitem[F15]{fujino15} 
O.~Fujino, Minimal model theory for log surfaces, 
Publ. Res. Inst. Math. Sci. \textbf{48} (2012), no. 2, 339--371.

\bibitem[F16]{fujino16} 
O.~Fujino, Basepoint-free theorems:~saturation, b-divisors, 
and canonical bundle formula, 
Algebra Number Theory \textbf{6} (2012), no. 4, 797--823.

\bibitem[F17]{fujino17} 
O.~Fujino, Fundamental theorems for semi log canonical pairs, 
Algebr. Geom. \textbf{1} (2014), no. 2, 194--228.

\bibitem[F18]{fujino18} 
O.~Fujino, Kodaira vanishing theorem for log-canonical and 
semi-log-canonical pairs, 
Proc. Japan Acad. Ser. A Math. Sci. \textbf{91} (2015), no. 8, 112--117.

\bibitem[F19]{fujino19} 
O.~Fujino, On log canonical rational singularities, 
Proc. Japan Acad. Ser. A 
Math. Sci. \textbf{92} (2016), no. 1, 13--18.

\bibitem[F20]{fujino20} 
O.~Fujino, Basepoint-free theorem of Reid--Fukuda type for 
quasi-log schemes, 
Publ. Res. Inst. Math. Sci. \textbf{52} (2016), no. 1, 63--81. 

\bibitem[F21]{fujino21} 
O.~Fujino, Koll\'ar-type effective freeness 
for quasi-log canonical pairs, 
Internat. J. Math. \textbf{27} (2016), no. 14, 1650114, 15 pp.

\bibitem[F22]{fujino22} 
O.~Fujino, Vanishing theorems, {\em{Minimal models and extremal 
rays (Kyoto, 2011)}}, 299--321, 
Adv. Stud. Pure Math., \textbf{70}, Math. Soc. Japan, [Tokyo], 2016.

\bibitem[F23]{fujino23} 
O.~Fujino, {\em{Foundations of the minimal model program}}, 
MSJ Memoirs, \textbf{35}. Mathematical Society of Japan, Tokyo, 2017.

\bibitem[F24]{fujino24} 
O.~Fujino, Pull-back of quasi-log structures, 
Publ. Res. Inst. Math. Sci. \textbf{53} (2017), no. 2, 241--259.

\bibitem[F25]{fujino25} 
O.~Fujino, Effective basepoint-free theorem for semi-log canonical 
surfaces, Publ. Res. Inst. Math. Sci. \textbf{53} (2017), no. 3, 349--370. 

\bibitem[F26]{fujino26} 
O.~Fujino, On semipositivity, injectivity 
and vanishing theorems, {\em{Hodge theory and $L^2$-analysis}}, 
245--282, Adv. Lect. Math. (ALM), \textbf{39}, Int. Press, Somerville, MA, 2017.

\bibitem[F27]{fujino27} 
O.~Fujino, Injectivity theorems, {\em{Higher dimensional algebraic 
geometry---in honour of Professor Yujiro Kawamata's sixtieth birthday}}, 
131--157, Adv. Stud. Pure Math., \textbf{74}, Math. Soc. Japan, Tokyo, 2017. 

\bibitem[F28]{fujino28} 
O.~Fujino, Semipositivity theorems for moduli problems, 
Ann. of Math. (2) \textbf{187} (2018), no. 3, 639--665.

\bibitem[F29]{fujino29} 
O.~Fujino, Vanishing and semipositivity theorems for semi-log canonical pairs, 
Publ. Res. Inst. Math. Sci. \textbf{56} (2020), no. 1, 15--32. 

\bibitem[F30]{fujino30} 
O.~Fujino, Fundamental properties of basic 
slc-trivial fibrations I, to appear in Publ. Res. Inst. Math. Sci. 

\bibitem[F31]{fujino31} 
O.~Fujino, Subadjunction for 
quasi-log canonical pairs 
and its applications, to appear in Publ. Res. Inst. Math. Sci.

\bibitem[F32]{fujino32}
O.~Fujino, Minimal model theory for log surfaces in Fujiki's class 
$\mathcal C$, Nagoya Math. J. \textbf{244} (2021), 256--282.

\bibitem[F33]{fujino33} 
O.~Fujino, On minimal model theory for algebraic log surfaces, 
Taiwanese J. Math. \textbf{25} (2021), no. 3, 477--489.

\bibitem[F34]{fujino34} 
O.~Fujino, A relative spannedness for log canonical pairs 
and quasi-log canonical pairs, to appear 
in Ann. Sc. Norm. Super. Pisa Cl. Sci.

\bibitem[F35]{fujino35} 
O.~Fujino, Cone theorem and Mori hyperbolicity, 
preprint (2020). arXiv:2102.11986 [math.AG]

\bibitem[FF]{fujino-fujisawa1} 
O.~Fujino, T.~Fujisawa, 
Variations of mixed Hodge structure and semipositivity theorems, 
Publ. Res. Inst. Math. Sci. \textbf{50} (2014), no. 4, 589--661. 

\bibitem[FFL]{fujino-fujisawa-liu} 
O.~Fujino, T.~Fujisawa, H.~Liu, 
Fundamental properties of basic slc-trivial fibrations II, to 
appear in Publ. Res. Inst. Math. Sci.

\bibitem[FFS]{fujino-fujisawa-saito} 
O.~Fujino, T.~Fujisawa, M.~Saito, 
Some remarks on the semipositivity theorems, 
Publ. Res. Inst. Math. Sci. \textbf{50} (2014), no. 1, 85--112. 

\bibitem[FG1]{fujino-gongyo1}
O.~Fujino, Y.~Gongyo, 
On canonical bundle formulas and subadjunctions, 
Michigan Math. J. \textbf{61} (2012), no. 2, 255--264.

\bibitem[FG2]{fujino-gongyo2} 
O.~Fujino, Y.~Gongyo, 
On the moduli b-divisors of lc-trivial fibrations, 
Ann. Inst. Fourier (Grenoble) \textbf{64} (2014), no. 4, 1721--1735.

\bibitem[FH1]{fujino-hashizume1} 
O.~Fujino, K.~Hashizume, 
Existence of log canonical modifications and its applications, 
preprint (2021). arXiv:2103.01417 [math.AG]

\bibitem[FH2]{fujino-hashizume2}
O.~Fujino, K.~Hashizume, 
Adjunction and inversion of 
adjunction, preprint (2021). arXiv:2105.14531 [math.AG]

\bibitem[FH3]{fujino-hashizume3} 
O.~Fujino, K.~Hashizume, On inversion of adjunction, 
Proc. Japan Acad. Ser. A Math. Sci. \textbf{98} (2022), no. 2, 13--18. 

\bibitem[FLh1]{fujino-liu-haidong1} 
O.~Fujino, H.~Liu, 
On normalization of quasi-log canonical pairs, 
Proc. Japan Acad. Ser. A Math. Sci. \textbf{94} (2018), no. 10, 97--101.

\bibitem[FLh2]{fujino-liu-haidong2} 
O.~Fujino, H.~Liu, 
Fujita-type freeness for quasilog canonical curves and surfaces, 
Kyoto J. Math. \textbf{60} (2020), no. 4, 1453--1467.

\bibitem[FLh3]{fujino-liu-haidong3} 
O.~Fujino, H.~Liu, Quasi-log canonical pairs are Du Bois, 
J. Algebraic Geom. \textbf{31} (2022), 105--112. 

\bibitem[FLw]{fujino-liu-wenfei}
O.~Fujino, W.~Liu, 
Simple connectedness of Fano log pairs with semi-log canonical singularities, 
Math. Z. \textbf{295} (2020), no. 1-2, 341--348. 

\bibitem[FMi1]{fujino-miyamoto1}
O.~Fujino, K.~Miyamoto, 
A characterizsation of projective spaces from the Mori theoretic viewpoint, 
Osaka J. Math. \textbf{58} (2021), no. 4, 827--837. 

\bibitem[FMi2]{fujino-miyamoto2}
O.~Fujino, K.~Miyamoto, 
Nakai--Moishezon ampleness criterion for real 
line bundles, to appear in Math. Ann. 

\bibitem[FMo]{fujino-mori} 
O.~Fujino, S.~Mori, 
A canonical bundle formula, 
J. Differential Geom. \textbf{56} (2000), no. 1, 167--188.

\bibitem[FT]{fujino-tanaka} 
O.~Fujino, H.~Tanaka, 
On log surfaces, 
Proc. Japan Acad. Ser. A Math. Sci. \textbf{88} (2012), no. 8, 109--114.

\bibitem[Fk1]{fukuda1}
S.~Fukuda, A base point free theorem of Reid type, 
J. Math. Sci. Univ. Tokyo \textbf{4} (1997), no. 3, 621--625.

\bibitem[Fk2]{fukuda2} 
S.~Fukuda, A base point free theorem of Reid type. II, 
Proc. Japan Acad. Ser. A Math. Sci. \textbf{75} (1999), no. 3, 32--34.

\bibitem[Fk3]{fukuda3} 
S.~Fukuda, A base point free theorem for log canonical surfaces, 
Osaka J. Math. \textbf{36} (1999), no. 2, 337--341. 

\bibitem[KMM]{kmm} 
Y.~Kawamata, K.~Matsuda, K.~Matsuki, 
Introduction to the minimal model problem, 
{\em{Algebraic geometry, Sendai, 1985}}, 283--360, 
Adv. Stud. Pure Math., \textbf{10}, North-Holland, Amsterdam, 1987.

\bibitem[KK]{kollar-kovacs} 
J.~Koll\'ar, S.~J.~Kov\'acs, 
Log canonical singularities are Du Bois, 
J. Amer. Math. Soc. \textbf{23} (2010), no. 3, 791--813. 

\bibitem[KM]{kollar-mori}
J.~Koll\'ar, S.~Mori, {\em{Birational geometry of algebraic varieties}}. With 
the collaboration of C.~H.~Clemens and A.~Corti. Translated from 
the 1998 Japanese original. Cambridge Tracts in 
Mathematics, \textbf{134}. Cambridge University Press, Cambridge, 1998.

\bibitem[La]{lazarsfeld} 
R.~Lazarsfeld, {\em{Positivity in algebraic geometry. II. Positivity 
for vector bundles, and multiplier ideals}}, 
Ergebnisse der Mathematik und ihrer Grenzgebiete. 3. Folge. A 
Series of Modern Surveys in Mathematics [Results 
in Mathematics and Related Areas. 3rd Series. A 
Series of Modern Surveys in Mathematics], \textbf{49}. Springer-Verlag, 
Berlin, 2004.

\bibitem[Li]{liu-haidong} 
H.~Liu, Angehrn--Siu type effective basepoint freeness for quasi-log 
canonical pairs, Kyoto J. Math. \textbf{59} (2019), no. 2, 455--470.

\bibitem[M]{matsuki} 
K.~Matsuki, {\em{Introduction to the Mori program}}, 
Universitext. Springer-Verlag, New York, 2002.

\bibitem[RG]{raynaud-g} 
M.~Raynaud, L.~Gruson, 
Crit\`eres de platitude et de projectivit\'e. Techniques 
de \lq\lq platification\rq\rq \ d'un module, 
Invent. Math. \textbf{13} (1971), 1--89. 

\bibitem[S]{shokurov}
V.~V.~Shokurov, $3$-fold log flips, Izv. Ross. Akad. Nauk 
Ser. Mat. \textbf{56} (1992), 105--203; English 
transl. in Russian Acad. Sci. Izv. Math. \textbf{40} (1993), 95--202. 
\end{thebibliography}
\end{document}